\newtheorem{definition}{Definition}
\newtheorem{lemma}[definition]{Lemma}
\newtheorem{proposition}[definition]{Proposition}
\newtheorem*{theorem}{Theorem}
\theoremstyle{remark}
\newtheorem{remark}[definition]{Remark}
\def\Re{\operatorname{Re}}
\def\Im{\operatorname{Im}}
\def\C{\mathbb{C}}
\def\R{\mathbb{R}}
\def\Z{\mathbb{Z}}
\def\N{\mathbb{N}}
\renewcommand{\S}{\mathbb{S}}
\def\D{\mathcal{D}}
\def\supp{\mathrm{supp}}
\def\WF{\mathrm{WF}}
\def\Lie{\mathcal{L}}
\DeclareMathOperator{\tr}{tr}
\newcommand{\vol}{\mathrm{vol}}
\renewcommand{\P}{\mathcal{P}} 
\DeclareMathOperator{\Res}{Res}
\newcommand{\SigmaE}{\Sigma_{\mathrm{e}}}
\newcommand{\SigmaCH}{\Sigma_{\mathrm{ch}}}
\newcommand{\ME}{M_{\mathrm{e}}}
\newcommand{\MCH}{M_{\mathrm{ch}}}
\DeclareMathAccent{\wtilde}{\mathord}{largesymbols}{"65}
\def\uTilde{\tilde u}
\def\fTilde{\tilde f}
\title[Zeta Function at Zero]
{Zeta Function at Zero for Surfaces with Boundary}
\author{Charles Hadfield}
\address{Department of Mathematics, University of California, Berkeley, CA 94720, USA}
\email{hadfield@math.berkeley.edu}
\begin{document}
\begin{abstract}
We study the Ruelle zeta function at zero for negatively curved oriented surfaces with boundary. At zero, the zeta function has a zero and its multiplicity is shown to be determined by the Euler characteristic of the surface. This is shown by considering certain Ruelle resonances and identifying their multiplicity with dimensions of the relative cohomology of the surface.
\end{abstract}
\maketitle
\thispagestyle{empty}

\section{Introduction}

Consider a compact smooth Riemannian surface $(\Sigma,g)$ without boundary and everywhere strictly negative curvature. The Ruelle zeta function \cite{Ruelle1976Zeta-functionsFlows} provides a differential geometric analogy to the Riemann zeta function by replacing a count over primes with a count over primitive closed geodesics $\{\gamma^\#\}$ whose respective lengths are $\{T_\gamma^\#\}$:
\begin{align}
	\zeta_R(\lambda) := \prod_{\gamma^\#} \left( 1 - e^{-\lambda T_\gamma^\#} \right).
\end{align}
Negative curvature implies this product converges for $\Re\lambda\gg1$. 

This zeta function is related to the Selberg zeta function \cite{Selberg1956HarmonicSeries,Smale1967DifferentiableSystems}
\begin{align*}
	\zeta_R(\lambda)=\frac{\zeta_S(\lambda)}{\zeta_S(\lambda+1)},
	\qquad
	\zeta_S(\lambda) := \prod_{\gamma^\#} \prod_{k \in \N_0} \left(  1 - e^{-(\lambda+k) T_\gamma^\#} \right)
\end{align*}
so results may be translated between the two settings. We will consider only the Ruelle zeta function.

The meromorphic extension of this zeta function has long been known in the setting of constant curvature thanks to the relationship with the Selberg zeta function \cite{Fried1986FuchsianTorsion}. Only recently however has the meromorphic extension been obtained in the setting of variable curvature. This result first appeared in \cite{Giulietti2013AnosovFunctions} and soon after, using a microlocal approach, in \cite{Dyatlov2016DynamicalAnalysis}. In the constant curvature setting, the zeta function vanishes at $\lambda=0$ and its order of vanishing is $-\chi(\Sigma)$ where $\chi(\Sigma)$ is the Euler characteristic of the surface \cite{Fried1986FuchsianTorsion}. This result holds true in variable curvature indicating the topological invariance of the order of vanishing of the zeta function at the origin \cite{Dyatlov2017RuelleSurfaces}. Unlike the constant curvature setting \cite{Fried1986FuchsianTorsion}, the value of the first non-trivial term in the power series representation of the zeta function about the origin is not understood in the variable curvature setting.

Consider now a compact Riemannian surface $(\Sigma,g)$ with strictly convex boundary $\partial \Sigma$ and everywhere strictly negative curvature. In this open setting, one defines the Ruelle zeta function exactly as in the closed case. Strict convexity of the boundary is geometrically appealing as it ensures that closed geodesics do not touch the boundary. Again, negative curvature implies the convergence of the product for $\Re\lambda\gg1$.

For constant curvature, the meromorphic extension of the zeta function has also been understood via the Selberg zeta function and its order of vanishing at $\lambda=0$ is $1-\chi(\Sigma)$ \cite{Patterson2001TheGroups,Borthwick2005SelbergsSurfaces}. See also \cite{Borthwick2016SpectralSurfaces,Guillarmou2018ClassicalSurfaces}. (The one exception to this is if the surface has vanishing Euler characteristic. In this case, the surface is a hyperboloid and the zeta function is a finite product consisting of the two primitive geodesics -- of equal length but opposite direction -- hence the zeta function has a zero of order 2 at the origin.) 

For variable curvature, the zeta function has a meromorphic extension due to \cite{Dyatlov2016Pollicott-RuelleSystems} which considerably extends the microlocal analysis performed in \cite{Dyatlov2016DynamicalAnalysis} by analysing dynamics at both spatial and frequency infinities. This result allows us to consider the zeta function near the origin. Here, we show that the result concerning the order of vanishing discovered in the constant curvature setting holds true in variable curvature.
\begin{theorem}
Let $(\Sigma, g)$ be an oriented Riemannian surface of negative curvature with strictly convex boundary and negative Euler characteristic $\chi(\Sigma)$. Then the Ruelle zeta function $\zeta_R(\lambda)$ has a zero at $\lambda=0$ of multiplicity precisely $1-\chi(\Sigma)$.
\end{theorem}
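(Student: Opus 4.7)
The plan is to adapt the microlocal strategy of \cite{Dyatlov2017RuelleSurfaces} for closed surfaces, implementing it within the open-system Anosov framework of \cite{Dyatlov2016Pollicott-RuelleSystems}. Let $M$ denote the unit tangent bundle $S\Sigma$ and $X$ the generator of the geodesic flow on $M$. The strict convexity of $\partial\Sigma$ permits a smooth extension of $X$ to a neighbourhood of $M$ on which $X$ remains Anosov over a compact hyperbolic trapped set. Using the Anosov splitting $TM = \R X \oplus E_s \oplus E_u$ and the two-dimensional complementary bundle $V^{*} := E_u^{*} \oplus E_s^{*}$, one would derive a factorisation
\begin{align*}
	\zeta_R(\lambda) = \prod_{k=0}^{2} \det\bigl(-\Lie_X - \lambda \,\big|_{\Lambda^k V^{*}}\bigr)^{(-1)^{k+1}},
\end{align*}
in which each factor is a regularised Fredholm determinant of the restriction of $-\Lie_X$ to the indicated bundle on an anisotropic Sobolev space. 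The order of vanishing of $\zeta_R$ at $0$ then equals $\sum_{k=0}^{2} (-1)^{k+1} m_k$, where $m_k$ is the algebraic multiplicity of $0$ as a Ruelle resonance of $-\Lie_X$ acting on $\Lambda^k V^{*}$.

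The core step is to identify each $m_k$ with a topological invariant of the pair $(\Sigma, \partial\Sigma)$. I would introduce horocyclic vector fields $U_\pm$ spanning the weak unstable and stable directions, so that $[\Lie_X, \Lie_{U_\pm}] = \pm \Lie_{U_\pm}$; these commutation relations organise the generalised resonant states at $0$ into a differential complex, with the Lie derivatives $\Lie_{U_\pm}$ acting as chain maps between consecutive form degrees. Adapting the argument of \cite{Dyatlov2017RuelleSurfaces}, a microlocal analysis using wavefront conditions at the conormals $E_s^{*}$ and $E_u^{*}$ should reduce the resonance problem to a problem about closed forms on $\Sigma$ satisfying a boundary condition along $\partial\Sigma$. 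The outcome sought is a natural isomorphism between the space of generalised resonant states at $0$ for $-\Lie_X$ on $\Lambda^k V^{*}$ and the relative de Rham cohomology $H^k(\Sigma, \partial\Sigma; \C)$, together with a supplementary contribution that accounts for the shift from the closed-case count $-\chi(\Sigma)$ to the open value $1 - \chi(\Sigma)$.

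The main obstacle is this last identification. In the closed-surface setting of \cite{Dyatlov2017RuelleSurfaces}, the resonant states are smooth forms on $M$ and the horocyclic operators act straightforwardly on them. In the present open setting the resonant states are genuine distributions in anisotropic Sobolev spaces, supported microlocally near the unstable manifold of the trapped set which meets $\partial\Sigma$ transversally; propagating the horocyclic ellipticity arguments across this transverse intersection while maintaining anisotropic regularity is the principal technical difficulty. One must in particular track the origin of the extra multiplicity $+1$ compared with the closed-surface case, accounting for it either as a Jordan block of the resonance at $0$ for one of the factors $\Lambda^k V^{*}$ or as a new cohomology class produced by the boundary geometry. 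Once this identification is in place, Lefschetz duality combined with $\chi(\Sigma, \partial\Sigma) = \chi(\Sigma)$ assembles the multiplicities to yield the claimed order $1 - \chi(\Sigma)$.
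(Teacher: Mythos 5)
Your high-level scaffolding (factorisation of $\zeta_R$ into $\zeta_1/(\zeta_0\zeta_2)$, identification of each $m_k$ with resonance multiplicities, then a cohomological identification) matches the paper's. But your proposed identification of $\Res_k(0)$ with $H^k(\Sigma,\partial\Sigma;\C)$ for \emph{all} $k$ is wrong, and the place where it fails is exactly the ``mysterious $+1$'' you flag at the end. The arithmetic with your identification gives $m_1 - m_0 - m_2 = b_1(\Sigma,\partial\Sigma) - b_0(\Sigma,\partial\Sigma) - b_2(\Sigma,\partial\Sigma) = -\chi(\Sigma,\partial\Sigma) = -\chi(\Sigma)$, the closed-case answer, not $1-\chi(\Sigma)$. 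There is no Jordan block or extra boundary class to hunt for: the missing input is that in the open system the non-escaping mass $V(t)$ decays exponentially (strictly negative escape rate, a consequence of hyperbolicity of $K$ plus strict convexity of $\partial\Sigma$), which by the result of Guillarmou forces $R_0(\lambda)$ to be \emph{regular} at $\lambda=0$. Hence $\Res_0(0)=\{0\}$ --- there are no constants, unlike the closed case --- and the $d\alpha$ algebraic argument then gives $\Res_2(0)=\{0\}$ as well. So $m_0=m_2=0$, not $b_0(\Sigma,\partial\Sigma)=0$ and $b_2(\Sigma,\partial\Sigma)=1$; in particular the $k=2$ count drops by one compared to what your identification would predict. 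With $m_0=m_2=0$ the entire order of vanishing is carried by $m_1$, and the target for the cohomological identification is $H^1(M,\partial M)$ for $M=S^*\Sigma$ (not $H^1(\Sigma,\partial\Sigma)$), whose rank is $1-\chi(\Sigma)$ by a Gauss-Bonnet computation on the circle bundle.

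Concretely, the steps you still need are: (i) cite or reprove the exponential decay of $V(t)$ and conclude $\Res_0(0)=\Res_2(0)=\{0\}$; (ii) build an isomorphism $\Res_1(0)\cong H^1(M,\partial M)$ --- the paper does this by a Hodge parametrix trick (replacing a resonant current $u$ with a cohomologous smooth $1$-form $v=u-df$, with $f$ supported in a neighbourhood of $\Gamma_+$ and $\Lie_Xf$ compactly supported) rather than horocyclic operators, because the boundary-support bookkeeping is cleaner; (iii) prove semisimplicity of the pole of $R_1$ at $0$, which does require a flux-type microlocal estimate in the spirit of what you sketch with $\Lie_{U_\pm}$, but adapted to the open setting where resonant states are genuinely distributional and supported on $\Gamma_+$. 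Until you have (i), the rest of your computation cannot produce the $+1$ shift.
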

As with the closed setting, the attractive problem of studying the precise value of the first non-trivial term in the power series representation remains untouched.

We conclude this introduction explaining the method. We also comment on the closed setting for context.

Consider $\Sigma$ a negatively curved compact surface with strictly convex boundary. Let $M=S^*\Sigma$ be the unit cotangent bundle, $\varphi_t:M\to M$ the geodesic flow, and $X\in C^\infty(M;TM)$ the generator of said flow. Let $\Lambda^k T^*_0M$ be the kernel of $\iota_X$ inside $\Lambda^kT^*M$.
The results of \cite{Dyatlov2016Pollicott-RuelleSystems} imply that one may construct the resolvents
\begin{align*}
	(\Lie_X + \lambda)^{-1} : L^2(M; \Lambda^k T^*_0M) \to L^2(M; \Lambda^k T^*_0M)
\end{align*}
which are well-defined for $\Re\lambda\gg1$ and which extend meromorphically to $\lambda\in\C$ (upon a delicate change in the domain and range of said operators). For fixed $k\in\{0,1,2\}$ and a pole $\lambda$ of the meromorphic extension of the resolvent, the associated residue is a projection operator of finite rank whose image defines (generalised) resonant states. These are distributions (or currents for $k>0$) satisfying certain wave-front conditions, support conditions, and are in the kernel of some power of $\Lie_X+\lambda$. Simultaneously the rank of the projection operator is precisely the order of vanishing at $\lambda$ for a certain zeta function $\zeta_k$ associated with $\Lie_X$ acting on $\Lambda^kT^*_0M$. Of course, the relevance of this result is via a factorisation of $\zeta_R$ giving
\begin{align*}
	\zeta_R(\lambda)
    =
    \frac{ \zeta_1(\lambda) }{ \zeta_0(\lambda) \zeta_2(\lambda) }
\end{align*}
hence one can study the order of vanishing of $\zeta_R$ by studying the space of generalised resonant states. Before proceeding, we remark that in all cases of interest, the poles at $\lambda=0$ are simple hence all generalised resonant states are in the kernel of $\Lie_X$ (rather than a power thereof); a result known in \cite{Dyatlov2017RuelleSurfaces} as semisimplicity to which we will return shortly. Due to semisimplicity, we drop the adjective generalised. Denote by $m_k(0)$ the multiplicity of the zero of $\zeta_k$ at $\lambda=0$.

\begin{remark}
Let us briefly comment on the closed manifold setting of \cite{Dyatlov2017RuelleSurfaces}. Resonant states at $\lambda=0$ are
\begin{align*}
	\{ u \in \D'(M;\Lambda^k_0 T^*M) : \WF(u)\subset E_u^*, \Lie_X u =0 \}.
\end{align*}
Here, $E_u^*$ is the unstable subbundle of $T^*M$ associated with the Anosov flow $X$. For $k=0$, the only possible resonant states are constant functions. Moreover, if $\alpha$ denotes the contact form associated with $X$ then an algebraic argument using $d\alpha$, which is parallel with respect to $\Lie_X$, immediately implies $m_2(0)=m_0(0)$. Hence $m_0(0)=b_0$ and $m_2(0)=b_2$, where $b_k$ are the Betti numbers of $\Sigma$. A slightly more difficult task is identifying $m_1(0)$ with $\dim H^1(M)$ (which by Gauss-Bonnet is equal to $b_1$). Up to a semisimplicity argument, the result follows.
\end{remark}

Returning to the present setting of an open manifold. Resonant states at $\lambda=0$ are
\begin{align*}
	\{ u \in \D'(M;\Lambda^k_0 T^*M) : \supp(u)\subset\Gamma_+, \WF(u)\subset E_+^*, \Lie_X u =0 \}.
\end{align*}
Here, $\Gamma_+$ is the set of points trapped in $M$ in backward time with respect to $\varphi_t$, and $E_+^*$ is an extension of the unstable bundle $E_u^*$ (which is only defined on the trapped set) from the trapped set to $\Gamma_+$. 
Due to negative curvature, the volume $V(t)$ of points in $M$ which remain in $M$ after application of $\varphi_t$ decreases exponentially with respect to time. For $k=0$ this implies $(\Lie_X+\lambda)^{-1}$ does not have a pole at $\lambda=0$. Hence $m_0(0)=0$. The same algebraic argument from the closed setting using $d\alpha$ then implies $m_2(0)=0$. It remains to study the space of resonant states for $k=1$. This is done by considering relative cohomology and building an isomorphism between the space of resonant states and $H^1(M,\partial M)$. A key analytic construction allowing this identification is Lemma~\ref{lem:analysis} providing a step between resonant states which are currents and smooth differential forms. (Gauss-Bonnet and Lefschetz duality then imply $m_1(0)=1-\chi(\Sigma)$.) 

The final step is showing simplicity of the pole at $\lambda=0$ for $k=1$. This requires a regularity result very much in the spirit of \cite[Lemma 2.3]{Dyatlov2017RuelleSurfaces} however the argument requires a subtle adaption using ideas from \cite{Dyatlov2016Pollicott-RuelleSystems}.

\subsection*{Acknowledgements} I would like to thank S. Dyatlov and M. Zworski for the many fruitful discussions surrounding this project including many patient explanations of the finer points in their relevant previous work. 

\section{Notation}\label{sec:notation}

\subsection{Geometry}

Let $(\Sigma,g)$ be an oriented Riemannian surface of negative curvature with boundary $\partial \Sigma$. We will also denote by $g$, the genus of $\Sigma$, and by $n$, the number of connected components of the boundary. The Euler characteristic of $\Sigma$ is $\chi(\Sigma)=2-2g-n$ which we take to be negative. Denote by $M$ the unit cotangent bundle of $\Sigma$:
\begin{align}
	M
    :=
    S^*\Sigma 
    =
    \{ (y,\eta) \in T^*\Sigma : g(\eta,\eta)=1 \}.
\end{align}
Let $\alpha \in \Omega^1(M)$ be the pull-back of the canonical one-form on $T^*M$. Then $\alpha$ is a completely non-integrable contact form and we set
\begin{align}
	d \vol_M := \alpha\wedge d\alpha.
\end{align}
The associated Reeb vector field $X\in C^\infty(M;TM)$, which is uniquely determined by
\begin{align}
	\iota_X \alpha = 1,
    \qquad
    \iota_X (d\alpha) = 0,
\end{align}
is the generator of the geodesic flow $\varphi_t : M \to M$. 

Let $\SigmaCH$ denote the convex hull of $\Sigma$. That is, the boundary of $\SigmaCH$ is totally geodesic. Set $\MCH:=S^*\SigmaCH$.

We construct a global frame for $T^*M$ \cite{Singer1967LectureGeometry,Guillemin1980Some2-manifolds}. Denote by $V\in C^\infty(M;TM)$ the generator of the $\S^1$ fibres of $M$ over $\Sigma$. If we denote by $(i\pi):M\to M$ the map given by anticlockwise rotation by $\pi/2$ in the $\S^1$ fibres, then define
\begin{align}
	\beta := (i\pi)^* \alpha \in \Omega^1(M).
\end{align}
Complete the frame by denoting the connection one-form $\omega\in \Omega^1(M)$. This is the unique one-form satisfying
\begin{align}
	\iota_V \omega = 1,
    \qquad
    d\alpha = \omega \wedge \beta,
    \qquad
    d\beta = \alpha\wedge\omega.
\end{align}
Note that $d\vol_M=\alpha\wedge d\alpha=-\alpha\wedge\beta\wedge\omega$, that $d\omega=K\alpha\wedge\beta$ where $K$ is the Gaussian curvature of the surface, and that $\alpha\wedge\beta$ is the pull-back of the area form $d\vol_\Sigma$ determined by the metric $g$.

\subsection{Topology}
We use relative cohomology \`a la Bott and Tu \cite{Bott1982DifferentialTopology}. The vector spaces are $\Omega^k(M)\oplus\Omega^{k-1}(\partial M)$ with differential
\begin{align}
d ( v^{(k)} , h^{(k-1)} ) := ( d v^{(k)} , j^* v^{(k)} - d h^{(k-1)} )
\end{align}
where $j:\partial M\to M$ is inclusion. The cohomology spaces are denoted $H^k(M,\partial M)$.

The first homology group of $\Sigma$ is of rank $2g+n-1=1-\chi(\Sigma)$. Lefschetz duality then implies $H^1(\Sigma,\partial\Sigma)$ is also of rank $1-\chi(\Sigma)$. The Gauss-Bonnet theorem provides
\begin{lemma}\label{lem:cohomology}
Let $\Sigma$ be a surface with boundary whose Euler characteristic is negative. Then $H^1(M,\partial M)$ has rank $1-\chi(\Sigma)$ where $M:=(T^*\Sigma\backslash 0)/\R^+$. 
\end{lemma}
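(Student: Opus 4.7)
The plan is to exploit the product structure of $M$. Since $\Sigma$ has nonempty boundary, it deformation retracts onto a $1$-complex, so the oriented rank-two tangent bundle $T\Sigma$ is trivial; via the metric $g$, this triviality descends to the unit (co)tangent bundle, providing a diffeomorphism $M \cong \Sigma \times \S^1$ under which $\partial M$ is identified with $\partial\Sigma \times \S^1$, a disjoint union of $n$ two-tori.

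Given this, Lefschetz duality for the oriented compact three-manifold $M$ with boundary supplies an isomorphism $H^1(M,\partial M) \cong H_2(M)$. The K\"unneth formula applied to $M \cong \Sigma \times \S^1$, together with the vanishings $H_2(\Sigma)=0$ (as $\Sigma$ has boundary) and $H_2(\S^1)=0$, collapses to $H_2(M) \cong H_1(\Sigma) \otimes H_1(\S^1) \cong H_1(\Sigma)$, which by the sentence immediately preceding the lemma statement has rank $2g+n-1 = 1-\chi(\Sigma)$.

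I do not anticipate any serious obstacle, as all of the invoked facts are classical; the only real content is the trivialisation of $T\Sigma$, which hinges on the hypothesis $\partial\Sigma \neq \emptyset$. If one prefers an argument more in keeping with the Bott--Tu relative de Rham formalism used later in the paper, one may alternatively work directly from the long exact sequence in real cohomology of the pair $(M,\partial M)$: the K\"unneth-computed dimensions of $H^k(\Sigma \times \S^1)$ and $H^k(\partial\Sigma \times \S^1)$, together with a brief analysis of the restriction $j^*\colon H^1(\Sigma) \to H^1(\partial\Sigma)$ (whose kernel has dimension $2g$ since the image is the $(n{-}1)$-dimensional kernel of the sum map $H^1(\partial\Sigma) \to \R$), recover the same count $(n-1) + 2g = 1-\chi(\Sigma)$.
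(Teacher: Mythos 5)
Your argument is correct, and it takes a genuinely different route from the paper. The paper proves the lemma by choosing a metric with totally geodesic boundary (the convex hull $\SigmaCH$), lifting the Gauss--Bonnet formula to $\MCH$, and using a local coordinate calculation near $\partial\MCH$ to show that $\pi^*\colon H^1(\SigmaCH,\partial\SigmaCH)\to H^1(\MCH,\partial\MCH)$ is surjective; the count $1-\chi(\Sigma)$ is then inherited from $H^1(\Sigma,\partial\Sigma)$ via Lefschetz duality on the base. You instead observe that $\partial\Sigma\neq\varnothing$ forces $T\Sigma$ to be trivial (an oriented rank-two bundle over a space homotopy equivalent to a wedge of circles has vanishing Euler class, hence is trivial), giving $M\cong\Sigma\times\S^1$, and then read off $H^1(M,\partial M)\cong H_2(M)\cong H_1(\Sigma)$ by Lefschetz duality on $M$ and K\"unneth. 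Both arguments are sound. Yours is shorter, purely topological, and avoids choosing a special metric; it exhibits the hypothesis $\partial\Sigma\neq\varnothing$ as the essential input very transparently. The paper's proof buys something different: the Gauss--Bonnet computation and the local boundary coordinate model $[0,1]_\rho\times\S^1_t\times\S^1_\theta$ with $j^*\omega=d\theta$ reappear almost verbatim in the proof of Lemma~\ref{lem:exact-v-h}, and establishing the explicit isomorphism via $\pi^*$ keeps later arguments (where the push-forward $\pi_*$ of specific closed one-forms is manipulated) self-contained. So the paper's detour through Gauss--Bonnet is a deliberate piece of foreshadowing rather than a necessity for the dimension count itself. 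Your alternative long-exact-sequence computation is also correct and consistent with the K\"unneth count.
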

\begin{proof}
We may suppose the surface has a metric whose boundary is totally geodesic, thus we prove the proposition using $(\SigmaCH,g)$ and $\MCH$. We denote by $\pi:\MCH\to\SigmaCH$ the projection and show that $\pi^*:H^1(\SigmaCH,\partial\SigmaCH)\to H^1(\MCH,\partial \MCH)$ is an isomorphism. Let $j$ denote both inclusions $\partial\SigmaCH\hookrightarrow\SigmaCH$ and $\partial \MCH \hookrightarrow \MCH$. As $\partial \SigmaCH$ is totally geodesic, the Gauss-Bonnet theorem reads simply
\begin{align}
	\int_{\SigmaCH} K d\vol_\Sigma = 2\pi \chi(\Sigma).
\end{align}
The proposition follows from surjectivity of $\pi^*$. Let $[(v,h)]\in H^1(\MCH , \partial\MCH)$ and search a candidate $[(w,k)]\in H^1(\SigmaCH,\partial\SigmaCH)$. It suffices to find $f\in\Omega^0(M)$ such that $\iota_Vv=-\Lie_Vf$. (This condition and $dv=0$ imply that $v+df\in\pi^*\Omega^1(\SigmaCH)$, from which we obtain $v=\pi^*w-df$. Similarly, as this condition implies $\Lie_V(h+j^*f)=0$, we may define $k$ by $h=\pi^*k-j^*f$. Therefore $(v-\pi^*w,h-\pi^*k)$ is given by the Bott and Tu differential of $-f$.) Such an $f$ may be constructed if $v$ integrates to zero over the $\S^1$ fibres. We denote this integration as $\pi_*v$ and remark that $\pi_*v$ is constant by Stokes' theorem since all fibres are homotopic. Lifting the Gauss-Bonnet formula to $\MCH$ gives
\begin{align}
	2 \pi \chi(\Sigma)\cdot \pi_*v 
	&= 
    \int_{\MCH} Kv \wedge \alpha\wedge\beta
    =
    \int_{\MCH} -v \wedge d\omega 
    =
    \int_{\partial \MCH} v\wedge \omega
    =
    \int_{\partial \MCH} d h\wedge \omega.
\end{align}
To complete the calculation, we take local coordinates for one component of $\partial \MCH$. Near such a component, the manifold appears as $[0,1]_\rho\times\partial\MCH\simeq [0,1]_\rho\times\partial\Sigma\times\S^1\simeq[0,1]_\rho\times\S^1_t\times\S^1_\theta$. And as $\partial\SigmaCH$ is a geodesic boundary, $j^*\omega=d\theta$. Therefore $dh\wedge\omega$ is the total derivative $d(h\wedge\omega)$ hence vanishes upon integration over $\partial\MCH$. As the Euler characteristic does not vanish, we conclude $\pi_*v=0$ as required.
\end{proof}

\subsection{Dynamics}

Let $\rho$ be a boundary defining function on $M$ (that is, $\rho\in C^\infty(M)$ such that $\rho>0$ on $M^o$, $\rho=0$ on $\partial M$, and $d\rho\neq0$ on $\partial M$). We suppose that $\partial M$ is strictly convex with respect to $X$. That is, we have have the  implication
\begin{align}
	x\in \partial M, (X\rho)(x)=0
    \quad
    \implies
    \quad
    (X^2\rho)(x)<0,
\end{align}
(which is independent of the chosen boundary defining function).
The boundary $\partial M$ decomposes into incoming/tangent/outgoing directions:
\begin{align}
	\partial M = \partial_-M \cup \partial_0M \cup \partial_+M
\end{align}
where
\begin{align}
	\partial_\pm M := \{ x \in \partial M : \pm d\rho(X_x) < 0 \},
    \qquad
    \partial_0 M := \{ x \in \partial M : d\rho(X_x) = 0 \}.
\end{align}
Define the outgoing/incoming tails $\Gamma_\pm \subset M$ and the trapped set $K$ by
\begin{align}
	\Gamma_\pm := \bigcap_{\pm t\ge0} \varphi_t(M),
    \qquad
    K:=\Gamma_+\cap\Gamma_-.
\end{align}
The flow is hyperbolic on $K$. That is, there exists a continuous splitting with respect to $x\in K$ of the cotangent bundle into neutral/stable/unstable bundles each of rank 1 and which is invariant under the flow:
\begin{align}
	T^*_xM = E^*_n(x) \oplus E^*_s(x) \oplus E^*_u(x),
    \qquad
    E^*_n(x)=\R\alpha.
\end{align}
Given a scalar product on $T^*M$, there are constants $C_1,C_2>0$ such that
\begin{align}
	| {\varphi_{-t}}^* \xi |
    \le
    C_1 e^{-C_2 |t|} | \xi |,
    \qquad
    \begin{cases}
    	\xi\in E_s^* & t\ge0;
    	\\
    	\xi\in E_u^*& t\le0.
    \end{cases}
\end{align}
The bundles $E^*_s, E^*_u$ may be extended to $\Gamma_-, \Gamma_+$, respectively. Specifically, there exist subbundles of rank 1, $E^*_\pm\subset T^*_{\Gamma_\pm}M$, which are in the annihilator of $X$, invariant under the flow, depend continuously on $x\in\Gamma_\pm$, and $E^*_+ |_K = E^*_u$ and $E^*_-|_K=E^*_s$. Moreover, if $x\in \Gamma_\pm$ and $\xi\in E^*_\pm$, then as $t\to \mp \infty$, 
\begin{align}
	| {\varphi_{-t}}^* \xi |
    \le
    C_1' e^{-C_2' |t|} | \xi |
\end{align}
for constants $C_1',C_2'$ independent of $(x,\xi)$. \cite[Lemma 1.10]{Dyatlov2016Pollicott-RuelleSystems}

Upon restriction to $\partial M$, the tails $\Gamma_\pm$ are contained in $\partial_\pm M$. Using a metric on $M$, giving a distance function $d(\cdot,\cdot)$, define 
\begin{align}
	\Gamma_\pm^\delta
    :=
    \{ x\in M : d(\Gamma_\pm, x) \le \delta \}.
\end{align}
By taking $\delta$ sufficiently small, we may assume that
\begin{align}
	\Gamma_\pm^\delta \cap \partial M \subset \partial_\pm M.
\end{align}

\section{Zeta function and Pollicott-Ruelle resonances}\label{sec:zeta-resonances}

\subsection{Zeta functions}

Let $\{\gamma\}$ denote the set of geodesics in $M$ and let $\{\gamma^\#\}$ denote the set of primitive geodesics. Given a geodesic $\gamma$, denote respectively by $T_\gamma$ and $T_\gamma^\#$ the length of $\gamma$ and the length of the corresponding primitive geodesic. The Ruelle zeta function is denoted
\begin{align}
	\zeta_R(\lambda) := \prod_{\gamma^\#} \left( 1 - e^{-\lambda T_\gamma^\#} \right)
\end{align}
Denote by $T^*_0M$ the subbundle of $TM$ which annihilates $X$. The pullback ${\varphi_t}^*$ respects the splitting $TM=\R\alpha\oplus T^*_0M$. Given a geodesic $\gamma$ of length $T_\gamma$ and a point $x\in\gamma\subset M$, we introduce the linearised Poincar\'e map
\begin{align}
	\P_{\gamma,x} := {\varphi_{-T_\gamma}}^* : (T^*_0M)_x \to (T^*_0M)_x.
\end{align}
As the endomorphism is conjugate to any other $\P_\gamma (x')$ for $x'\in\gamma$, its determinant and trace are independent of $x$ and under such circumstances we will drop the notation of $x$. We have the following (linear algebra) expression:
\begin{align}
	\det \left( I - \P_\gamma \right) = \sum_{k=0}^2 (-1)^k \tr \Lambda^k \P_\gamma.
\end{align}
A standard manipulation using the preceding expression (as well as the Taylor series for $\log(1-x)$ and, more subtly,  the orientability of the stable and unstable bundles) converts the Ruelle zeta function into an alternating product of zeta functions:
\begin{align}
	\zeta_R(\lambda)
    =
    \frac{ \zeta_1(\lambda) }{ \zeta_0(\lambda) \zeta_2(\lambda) }
\end{align}
where
\begin{align}
	\log \zeta_k(\lambda) := - \sum_{\gamma} \frac{ T_\gamma^\# e^{-\lambda T_\gamma} \tr \Lambda^k \P_\gamma }{ T_\gamma \left| \det (I - \P_\gamma) \right| }.
\end{align}

\subsection{Pollicott-Ruelle resonances}

The Lie derivative with respect to $X$ acting on $\Omega^k(M)$ respects the decomposition $T^*M=\R\alpha\oplus T^*_0M$. Restricting to $T^*_0M$, we consider the transfer operator
\begin{align}
	e^{-t\Lie_X} : C^\infty(M; \Lambda^k T^*_0M) \to C^\infty(M; \Lambda^k T^*_0M).
\end{align}
Given 
	$f\in C^\infty(M),
    u\in C^\infty(M;\Lambda^k T^*_0M)$,
we have $\Lie_X(fu)=(\Lie_Xf)u+f(\Lie_X u)$ from which the transfer operator satisfies
\begin{align}
	e^{-t\Lie_X} (fu) = (\varphi_{-t}^*f)(e^{-t\Lie_X}u).
\end{align}
After having fixed a smooth inner product on $T^*_0M$ (not necessarily invariant under the flow), we have
\begin{align}
	e^{-t\Lie_X} : L^2(M; \Lambda^k T^*_0M) \to L^2(M; \Lambda^k T^*_0M).
\end{align}
Due to the existence of $C_0>0$ such that
\begin{align}
	\| e^{-t\Lie_X} \|_{L^2(M; \Lambda^k T^*_0M) \to L^2(M; \Lambda^k T^*_0M)} \le e^{C_0t},
    \qquad
    t\ge0,
\end{align}
we may define the resolvent $(\Lie_X+\lambda)^{-1}$ on $L^2(M; \Lambda^k T^*_0M)$ for $\Re\lambda>C_0$ by the formula
\begin{align}
	(\Lie_X + \lambda)^{-1} := \int_0^\infty e^{-t(\Lie_X+\lambda)} dt.
\end{align}
A principal result of \cite{Dyatlov2016Pollicott-RuelleSystems}, is that the restricted resolvent
\begin{align}
	R_k(\lambda)
    =
    (\Lie_X+\lambda)^{-1}
    :
    C^\infty_0(M;\Lambda^k T^*_0M)
    \to
    \D'(M;\Lambda^k T^*_0M)
\end{align}
has a meromorphic continuation to $\C$ whose poles are of finite rank. The poles of which are called Pollicott-Ruelle resonances.
Moreover, for each $\lambda_0\in \C$, we have the expansion
\begin{align}
	R_k(\lambda)=R_k^H(\lambda) + \sum_{j=1}^{J(\lambda_0)} \frac{(-1)^{j-1}(\Lie_X+\lambda_0)^j\Pi_{\lambda_0} }{(\lambda-\lambda_0)^j}
\end{align}
where $R_k^H$ is holomorphic near $\lambda_0$ and
\begin{align}
	\Pi_{\lambda_0}
    =
    C^\infty_0(M; \Lambda^k T^*_0M) \to \D'(M; \Lambda^k T^*_0M)
\end{align}
is a finite rank projector. 
The range of $\Pi_{\lambda_0}$ defines generalised resonant states. They are characterised as
\begin{align}
	\Res_k(\lambda_0)
    &:=
    \operatorname{Ran} \Pi_{\lambda_0}
    \\
    &\hphantom{:}=
    \{ u\in\D'(M,\Lambda^k T^*_0M) : \supp (u) \subset \Gamma_+, \WF(u)\subset E^*_+, (\Lie_X+\lambda_0)^{J(\lambda_0)}u=0 \}.
\end{align}
A generalised resonant state is called simply a resonant state if it is in $\ker(\Lie_X+\lambda_0)$.
Finally, it is shown that poles of the meromorphic continuation correspond to zeros of the zeta function $\zeta_k$, and that the rank of the projector $\Pi_{\lambda_0}$ equals the multiplicity of the zero, denoted $m_k(\lambda_0)$.

Let $\mathcal{T}(t)\subset M$ be the set of points $x\in M$ such that $\varphi_{-s}(t)\in M$ for all $s\in[0,t]$, and let $V(t):=\mathrm{Vol}(\mathcal{T}(t))$ be the non-escaping mass function. In our setting, the escape rate
\begin{align}
	Q:= \limsup_{t\to\infty} \frac 1t \log V(t)
\end{align}
is strictly negative \cite[Proposition 2.4]{Guillarmou2017LensSets} thanks to the hyperbolicity of the trapped set, and the strict convexity of the boundary. Hence $V(t)$ decays exponentially fast and \cite[Propostion 4.4]{Guillarmou2017LensSets} provides
\begin{proposition}\label{prop:NoPole}
	The resolvent $R_0(\lambda)$ does not have a pole at $\lambda=0$.
\end{proposition}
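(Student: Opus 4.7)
The plan is to exploit the fact that for $k=0$ the transfer operator reduces to ordinary pullback, $e^{-t\Lie_X}=\varphi_{-t}^{*}$ acting on scalar functions, combined with preservation of the contact volume form $d\vol_M$ by the geodesic flow. Together with exponential decay of the non-escaping mass $V(t)$, this will yield an $L^2$ bound strong enough to extend the defining integral for $R_0(\lambda)$ holomorphically through the origin.

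First I would fix a test function $f\in C^\infty_0(M)$ and estimate
\[
\|e^{-t\Lie_X}f\|_{L^2(M)}^{2}
=\int_{\mathcal{T}(t)} |f(\varphi_{-t}(x))|^{2}\,d\vol_M(x)
\le \|f\|_\infty^{2}\, V(t),
\]
using that $\varphi_{-t}^{*}f$ is automatically supported in $\mathcal{T}(t)$ (the set where the backward flow is defined for time $t$) and the trivial pointwise bound $|f|\le\|f\|_\infty$. Since $Q<0$, the escape estimate provides $\epsilon>0$ and $C>0$ with $V(t)\le Ce^{-2\epsilon t}$, whence $\|e^{-t\Lie_X}f\|_{L^2(M)}\le C^{1/2}\|f\|_\infty\, e^{-\epsilon t}$.

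Next, observe that with this improved decay the integral representation
\[
R_0(\lambda)f = \int_0^{\infty} e^{-t\lambda}\, e^{-t\Lie_X}f\,dt
\]
converges absolutely in $L^2(M)$ for every $\lambda$ with $\Re\lambda>-\epsilon$, producing a holomorphic operator family $C^\infty_0(M)\to L^2(M)\hookrightarrow \D'(M)$ on a half-plane strictly containing $\lambda=0$. On the overlap $\Re\lambda>C_0$ this family coincides with $R_0(\lambda)$ as built in \cite{Dyatlov2016Pollicott-RuelleSystems}, so by uniqueness of meromorphic continuation the latter extension must in fact be holomorphic throughout $\Re\lambda>-\epsilon$, and in particular at $\lambda=0$.

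The main subtlety is the matching step: one must verify that the $L^2$-valued holomorphic family just produced agrees, as a distribution-valued operator, with the meromorphic extension of \cite{Dyatlov2016Pollicott-RuelleSystems} on the connected overlap region. This is immediate here since both are meromorphic on the connected strip $\{-\epsilon<\Re\lambda\}$ and both realise the same Bochner integral for $\Re\lambda>C_0$; the identity principle closes the argument. I note that this route is essentially special to $k=0$: for $k=1,2$ the transfer operator is no longer an isometry on the fibres of $\Lambda^{k}T^{*}_{0}M$, so $V(t)$ alone does not control the $L^{2}$-norm and a pole at the origin is a genuine possibility for $\zeta_{1}$.
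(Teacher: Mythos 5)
Your argument is correct and complete, while the paper handles this proposition entirely by citation: it invokes \cite[Proposition 2.4]{Guillarmou2017LensSets} for negativity of the escape rate $Q$, and then \cite[Proposition 4.4]{Guillarmou2017LensSets} to conclude. What you have written is essentially the content behind that citation, specialized to $k=0$: the pullback on scalars is an $L^\infty$ contraction supported in $\mathcal{T}(t)$, so exponential decay of $V(t)$ yields $\|e^{-t\Lie_X}f\|_{L^2}\le C\|f\|_\infty e^{-\epsilon t}$, the Bochner integral then converges on $\{\Re\lambda>-\epsilon\}$, and uniqueness of analytic continuation (tested pairing-by-pairing against $f,\phi\in C^\infty_0(M)$) forces the meromorphic extension of \cite{Dyatlov2016Pollicott-RuelleSystems} to agree and hence be pole-free there. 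One small inaccuracy in the framing: you advertise preservation of $d\vol_M$ as an ingredient, but your estimate never uses it --- and could not, since volume preservation alone would only give the non-decaying bound $\|\varphi_{-t}^*f\|_{L^2}\le\|f\|_{L^2}$; the decay comes precisely from trading $\|f\|_{L^2}$ for $\|f\|_\infty\, V(t)^{1/2}$. Your closing observation that this mechanism is special to $k=0$ is exactly right: for $k\ge1$ the transfer operator acts on the fibres of $\Lambda^k T^*_0M$ through the derivative of the flow, which is not uniformly bounded, and this is precisely why the $k=1$ resonance at zero survives and carries the cohomological content of the rest of the paper.
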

We observe that $\Res_2(0)=\{0\}$. Indeed, suppose that $u^{(2)}\in\Res_2(0)$ is a resonant state, that is $\Lie_X u^{(2)}=0$. Since $\Lambda^2T^*_0 M=\R d\alpha$, we have $u^{(2)}=:u^{(0)} d\alpha$ for $u^{(0)}\in\D'(M)$ with $\supp ( u^{(0)} )\subset\Gamma_+$ and $\WF(u^{(0)})\subset E^*_+$. Moreover $\Lie_X u^{(0)}=0$ because $\Lie_X d\alpha=0$ hence $u^{(0)}\in\Res_0(0)=\{0\}$. We have proved
\begin{proposition}
	The resolvent $R_2(\lambda)$ does not have a pole at $\lambda=0$.
\end{proposition}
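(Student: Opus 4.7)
The plan is to reduce the $k=2$ case to the $k=0$ case (already handled by Proposition \ref{prop:NoPole}) using the algebraic structure of the bundle $\Lambda^2 T^*_0 M$ together with the flow-invariance of $d\alpha$. Since the rank of the residue projector $\Pi_0$ equals $\dim \Res_2(0)$, it suffices to prove $\Res_2(0)=\{0\}$.

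First, I would observe that $d\alpha$ provides a global, nowhere-vanishing, smooth section of the line bundle $\Lambda^2 T^*_0 M$: it annihilates $X$ by the defining property $\iota_X d\alpha = 0$ of the Reeb field, and it is nondegenerate on the contact distribution $\ker\alpha$. In particular $\Lambda^2 T^*_0 M = \R \cdot d\alpha$ as a trivial line bundle, so every current $u^{(2)}\in\D'(M;\Lambda^2T^*_0M)$ may be written uniquely as $u^{(2)}= u^{(0)} d\alpha$ with $u^{(0)} \in \D'(M)$. Because $d\alpha$ is smooth and nowhere vanishing, the support and wavefront set of $u^{(2)}$ coincide with those of $u^{(0)}$.

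Next, I would use Cartan's magic formula together with $\iota_X d\alpha = 0$ and $d^2=0$ to obtain $\Lie_X d\alpha = d \iota_X d\alpha + \iota_X d(d\alpha) = 0$. Consequently $\Lie_X^j u^{(2)} = (\Lie_X^j u^{(0)}) d\alpha$ for every $j\ge 0$. Given any generalised resonant state $u^{(2)} \in \Res_2(0)$, characterised by $\supp u^{(2)}\subset\Gamma_+$, $\WF(u^{(2)})\subset E^*_+$ and $\Lie_X^{J(0)} u^{(2)} = 0$, the scalar factor $u^{(0)}$ inherits the same three conditions and therefore lies in $\Res_0(0)$. By Proposition \ref{prop:NoPole} this space is trivial, so $u^{(0)}=0$ and hence $u^{(2)}=0$.

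I do not foresee any genuine obstacle: the proof is a purely algebraic transfer made possible by the fortunate combination that $d\alpha$ is at once smooth, globally nonvanishing on $\Lambda^2 T^*_0 M$, and $\Lie_X$-invariant. The one point to handle carefully is that the statement concerns the \emph{generalised} resonant space, not merely $\ker \Lie_X$; this is no trouble because $\Lie_X^{J(0)}$ commutes with multiplication by $d\alpha$, so the scalar reduction applies at every order simultaneously.
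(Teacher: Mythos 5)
Your proposal is correct and follows essentially the same route as the paper: decompose $u^{(2)}=u^{(0)}d\alpha$ using that $\Lambda^2T^*_0M=\R\,d\alpha$ and transfer the conditions to $u^{(0)}\in\Res_0(0)=\{0\}$ via $\Lie_Xd\alpha=0$. The only (minor, and welcome) refinement is that you explicitly handle generalised resonant states by noting $\Lie_X^j$ commutes with multiplication by $d\alpha$; the paper argues only at the level of true resonant states, which suffices since a nontrivial generalised resonant space would already force a nontrivial kernel of $\Lie_X$.
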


In Sections~\ref{sec:semisimplicity},~\ref{sec:analysis}, we prove 
\begin{proposition}\label{prop:Semisimplicity}
	The resolvent $R_1(\lambda)$ has a simple pole at $\lambda=0$.
\end{proposition}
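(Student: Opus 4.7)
The plan is to proceed by contradiction. Suppose there is $u\in\D'(M;\Lambda^1 T^*_0M)$ with $\supp u\subset\Gamma_+$, $\WF(u)\subset E^*_+$ and $\Lie_X^2 u=0$, but with $v:=\Lie_X u\neq0$. Since $u$ is $T^*_0M$-valued we have $\iota_X u=0$, and Cartan's formula gives the basic identity
\begin{align*}
v=\Lie_X u=\iota_X(du),
\end{align*}
so that $v$ is realised as the $X$-contraction of the closed $2$-current $du$. The ultimate goal is to promote this to the statement that $v$ represents the trivial class under the isomorphism $\Res_1(0)\cong H^1(M,\partial M)$ to be built in Section~\ref{sec:analysis}; by that isomorphism, triviality of the class forces $v=0$, yielding the desired contradiction.

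The bridge between the purely distributional identity and its cohomological consequence is a regularity upgrade for $u$, modelled on \cite[Lemma 2.3]{Dyatlov2017RuelleSurfaces} but adapted via the microlocal framework of \cite{Dyatlov2016Pollicott-RuelleSystems}. My intention is to use radial-source/sink propagation estimates for $\Lie_X$ on the anisotropic Sobolev spaces of \cite{Dyatlov2016Pollicott-RuelleSystems} to convert the a priori wavefront control $\WF(u)\subset E^*_+$ into genuine regularity along $\Gamma_+$, respecting both the support condition and the behaviour of trajectories approaching $\partial_+M$. Once this is achieved $du$ is regular enough to pair against the smooth test objects appearing in the construction of Section~\ref{sec:analysis}, and the identity $v=\iota_X(du)$, combined with Lemma~\ref{lem:analysis}, should provide precisely a relative coboundary witness certifying that the class of $v$ in $H^1(M,\partial M)$ vanishes.

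The principal obstacle is the regularity upgrade itself. In the closed setting of \cite{Dyatlov2017RuelleSurfaces} the propagation only has to cope with compact Anosov dynamics and $E^*_u$ is controlled uniformly; in the present setting one must simultaneously handle the noncompact directions in which $\Gamma_+$ extends away from the trapped set $K$, the merely continuous extension of $E^*_u$ to $E^*_+$, and the boundary dynamics near $\partial_+M$ where strict convexity of $\partial M$ is the only available tool. Marrying the microlocal estimates of \cite{Dyatlov2016Pollicott-RuelleSystems} with the Cartan-formula computation above so that the ensuing Stokes-type manipulations are fully justified is where the ``subtle adaption'' alluded to in the introduction lives, and I expect it to be the most delicate part of the argument.
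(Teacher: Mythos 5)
Your high-level target --- show that $v:=\Lie_X u$ maps to the trivial class under $\Res_1(0)\hookrightarrow H^1(M,\partial M)$ and invoke injectivity --- is a logically valid endpoint, and your reference pool (\cite[Lemma 2.3]{Dyatlov2017RuelleSurfaces} adapted via \cite{Dyatlov2016Pollicott-RuelleSystems}) is the right one. But the proposed bridge is both aimed at the wrong object and missing the mechanism that actually triggers regularity.

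First, a ``regularity upgrade for $u$'' (your $u$, the paper's $\tilde u$) cannot be achieved by radial-source/sink estimates alone. The equation is $\Lie_X u = v$, and $v$ is exactly the distributional object whose vanishing you are trying to prove; it is supported on $\Gamma_+$ with wavefront in $E_+^*$ and is genuinely singular. Radial estimates propagate regularity modulo the regularity of the forcing term, so with a singular right-hand side they give nothing. The lemma you are reaching for (Lemma~\ref{lem:flux-simple} / \cite[Lemma 2.3]{Dyatlov2017RuelleSurfaces}) is not a blanket propagation statement: it has a sign/vanishing hypothesis $\Re\int_M \Lie_X f'\,\overline{f'}\,d\vol_M = 0$ which must be established \emph{before} any smoothness is extracted, and it is stated for scalars. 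The paper applies it not to $\tilde u$ but to an auxiliary scalar $f'=f+h'$ built from Lemma~\ref{lem:analysis} together with a boundary extension $h'$ of $h$ arranged so that $j^*v'=0$; it is $f'$ (and hence $v'+df'=v$, not $\tilde u$) that comes out smooth.

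Second, the Cartan identity $v=\iota_X(du)$ is strictly weaker than what is needed and does not by itself say anything about the cohomology class of $v$. The crucial structural fact is $d\tilde u=\alpha\wedge v$, which pins down the full two-current $d\tilde u$, not just its $\iota_X$-contraction. Establishing it requires the non-resonance results $\Res_0(0)=\Res_2(0)=\{0\}$ from Section~\ref{sec:zeta-resonances}: one shows $\alpha\wedge d\tilde u=0$ by observing that the $d\alpha$-coefficient of $d\tilde u$ is a scalar resonant state and hence zero, and then reads off $d\tilde u=\alpha\wedge v$. This identity, together with $dv'=0$ and the boundary condition $j^*v'=0$, is precisely what makes the flux integral vanish by a purely algebraic computation. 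Your plan inverts the logical order --- regularity first, Stokes manipulations second --- whereas the argument only closes if the flux identity is proved first (no regularity required) and then unlocks the microlocal estimate. Without a plan for proving the flux condition and without the identity $d\tilde u=\alpha\wedge v$, the proposal as written does not go through.
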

Assuming Proposition~\ref{prop:Semisimplicity}, we note that
$\Res_1(0)$ consists only of resonant states:
\begin{align}
	\Res_1(0)
    =
    \{ u\in\D'(M,T^*_0M) : \supp (u) \subset \Gamma_+, \WF(u)\subset E^*_+, \Lie_X u = 0 \}.
\end{align}
and the
Theorem follows if we can show
\begin{align}
	\dim \Res_1(0) = \dim H^1(M,\partial M).
\end{align}

\section{Identification of resonances with relative cohomology}\label{sec:construction-cohomology}
\subsection{Construction of map}

We begin with an analytical result to be proved in Section~\ref{sec:analysis}
\begin{lemma}\label{lem:analysis}
Let $u \in \Res_1(0)$. There exists $f\in \D'(M)$ and $v\in \Omega^1(M)$ such that
\begin{align}
	\supp(f)\subset \Gamma_+^\delta,
    \qquad
    \WF(f)\subset E^*_+,
    \qquad
    \Lie_X f \in C^\infty_0(M),
\end{align}
and $v=u-df$ with $v\in\ker d$.
\end{lemma}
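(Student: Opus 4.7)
The first step of my plan is to show that $u$ is automatically a closed current. Since $u$ is a section of $\Lambda^1 T^*_0 M = \ker\iota_X$, we have $\iota_X u = 0$, and Cartan's magic formula yields
\begin{align*}
\iota_X du = \Lie_X u - d\iota_X u = 0,
\end{align*}
so that $du \in \D'(M; \Lambda^2 T^*_0 M)$. Moreover $\supp(du) \subset \supp(u) \subset \Gamma_+$, $\WF(du) \subset \WF(u) \subset E^*_+$, and $\Lie_X du = d\Lie_X u = 0$. Thus $du$ is a resonant state for $k=2$, and the proposition in Section~\ref{sec:zeta-resonances} asserting $\Res_2(0) = \{0\}$ forces $du = 0$.

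With $u$ closed, the plan is to produce $f$ by solving, in a microlocally controlled way, the equation $df = u - v$ for a smooth closed 1-form $v$. The candidate $v$ should be a smooth 1-form on $M$ with compact support in $M^o \cap \Gamma_+^\delta$, cohomologous to $u$ as currents, so that it represents the class of $u$ in $H^1(M,\partial M)$ (this is the map eventually used to identify $\Res_1(0)$ with relative cohomology). Existence of such a $v$ is plausible because $u$ is already smooth in directions transverse to $E^*_+$, so one can mollify along the (co)stable direction while preserving the de Rham class. Granted $v$, the current $u - v$ is exact, say $d\tilde f = u - v$ for some $\tilde f \in \D'(M)$; multiplying by a cutoff $\chi$ supported in $\Gamma_+^\delta$ and equal to $1$ near $\Gamma_+$ yields $f := \chi\tilde f$, with $\supp(f)\subset\Gamma_+^\delta$ automatic and the error $d\chi\wedge\tilde f$ absorbed into a smoothness correction of $v$ (permissible since $\chi$ is constant where $\tilde f$ may be singular).

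To verify $\Lie_X f \in C^\infty_0(M)$: applying $\Lie_X$ to $u - df = v$ and using $\Lie_X u = 0$ gives $d\Lie_X f = -\Lie_X v$. Since $v$ is closed, $\Lie_X v = d\iota_X v$, so $\Lie_X f + \iota_X v$ is locally constant; as $v$ is smooth with compact support in $M^o$, $\iota_X v$ is smooth and compactly supported, and after absorbing the constant into an additive adjustment of $f$ on each connected component, so is $\Lie_X f$. The wavefront bound $\WF(f) \subset E^*_+$ should then follow from radial/propagation-of-singularities estimates for $\Lie_X$ applied to the transport equation $\Lie_X f = -\iota_X v + \text{const}$, in the anisotropic framework of \cite{Dyatlov2016Pollicott-RuelleSystems}, combined with the support constraint $\supp(f)\subset\Gamma_+^\delta$.

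The hard part will be constructing the smooth representative $v$ with all three properties simultaneously: closed, compactly supported in $M^o \cap \Gamma_+^\delta$, and cohomologous as a current to $u$. This requires combining a microlocal smoothing adapted to the anisotropic regularity of $u$ with a de Rham-type construction for the pair $(M,\partial M)$; and the wavefront analysis of $f$ then rests on the propagation estimates near the fibre infinity where the resolvent of \cite{Dyatlov2016Pollicott-RuelleSystems} is constructed. I expect this microlocal-cohomological coupling to be the principal technical burden, which is why the proof merits a dedicated section.
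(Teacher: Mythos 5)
Your opening step is right and matches the paper's implicit setup: $\iota_X u = 0$ together with $\Lie_X u = 0$ and Cartan's formula gives $du\in\Res_2(0)=\{0\}$. Beyond that, your strategy reverses the paper's. You propose to build the smooth closed form $v$ first (by some mollification along the unstable codirection, chosen to preserve the de Rham class) and then to produce $f$ by integrating $u-v$. The paper instead builds $f$ directly as (schematically) $f:=\chi\, d^*Qu$, where $Q$ is a parametrix for a Hodge Laplacian on an extension $\ME$, and then reads off $v:=u-df$. The identity $u-dd^*Qu = d^*dQu + (\text{smooth})$ is the engine: $d^*dQu$ is smooth because $Q$ commutes with $d,d^*$ modulo smoothing and $du$ is already smooth. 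This route gives the smoothness of $v$, the support bound $\supp(f)\subset\Gamma_+^\delta$ (from the cutoff $\chi$), and the wavefront bound $\WF(f)\subset\WF(u)\subset E^*_+$ all at once, the last simply because $d^*Q$ is a pseudodifferential operator. Your approach has to earn each of these separately.

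The gap in your proposal is precisely the step you label as the "principal technical burden": the construction of $v$ is asserted to be plausible but is not carried out, and it is where the whole content of the lemma lives. Several of the properties you want for $v$ pull against the mollification idea. Mollification does not shrink supports, so requiring $v$ to be compactly supported in $M^o\cap\Gamma_+^\delta$ is not something a smoothing of $u$ (supported in $\Gamma_+$, which touches $\partial_+M$) delivers; and without that compact support, your derivation of $\Lie_X f\in C^\infty_0(M)$ from $\Lie_X f=-\iota_Xv+\text{const}$ fails, since $\iota_Xv$ is then merely smooth, not compactly supported. (Also, the constant cannot be "absorbed into an additive adjustment of $f$" — adding a constant to $f$ does not change $\Lie_X f$; one must instead observe the constant is zero by evaluating near $\partial_-M$.) There is a further circularity in invoking "the class of $u$ in $H^1(M,\partial M)$": that map is what Section~\ref{sec:construction-cohomology} constructs \emph{from} this lemma, so it cannot enter the lemma's proof; only the absolute class in $H^1(M)$ is available a priori. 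Finally, the genuinely delicate issue the paper's proof addresses — and your sketch does not — is making $v$ smooth up to and including $\partial M$. The paper does this by exploiting that $\iota_Xu=0$, $\Lie_Xu=0$ force $u$ to be pulled back from $\partial M$ in a collar where $X=-\partial_\rho$, and then gluing a product-type Hodge construction near the boundary to the interior one (Remarks~\ref{remark:1}--\ref{remark:3}). A proof along your lines would need an equivalent boundary analysis.
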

Here, $\Omega^\bullet(M)$ are smooth differential forms up to, and including on, the boundary, while $C^\infty_0(M)$ denotes smooth functions whose support is contained in the interior of $M$.

In order to identify a candidate relative cohomology class, consider $u\in \Res_1(0)$, and construct $v,f$ as in Lemma~\ref{lem:analysis}. We seek an $h\in\Omega^0(\partial M)$ such that $[(v,h)]\in H^1(M,\partial M)$. To this end we first prove
\begin{lemma}\label{lem:exact-v-h}
	For $u\in\Res_1(0)$ the constructed $v=u-df\in\Omega^1(M)$ of Lemma~\ref{lem:analysis} is exact upon pull-back to $\partial M$.
\end{lemma}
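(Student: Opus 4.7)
To show that the closed 1-form $j^*v$ is exact on $\partial M$, I would verify that its periods over a basis of $H_1(\partial M)$ vanish. Since each connected component of $\partial M$ is a torus $T_i\cong\partial\Sigma_i\times\S^1$, it suffices to treat a fibre cycle and a base cycle $\partial\Sigma_i\times\{\theta_0\}$ on each $T_i$. A crucial input from Lemma~\ref{lem:analysis} is that $\iota_X v = -\Lie_X f \in C^\infty_0(M)$ is compactly supported in the interior of $M$, so that near $\partial M$ both $\iota_X v = 0$ and $\Lie_X v = 0$; moreover $v$ vanishes identically on $M \setminus \Gamma_+^\delta$, which means $j^*v$ already vanishes on a neighbourhood of $\partial_-M\cup\partial_0M$.

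For the fibre period, $\pi_*v$ is a constant function on $\Sigma$ by closedness of $v$ and Stokes' theorem on the closed $\S^1$-fibres (with $\pi:M\to\Sigma$ the natural projection). Writing $v = u - df$ gives $\pi_* df = 0$, so $\pi_*v = \pi_*u$. To show this constant vanishes I would follow the Gauss--Bonnet-type reduction of Lemma~\ref{lem:cohomology}, which (using $d\omega = K\alpha\wedge\beta$, $dv=0$ and Stokes) identifies $\pi_*v$ with a nonzero multiple of $\int_{\partial M} v\wedge\omega$; the support restriction of $v$ to $\partial_+M\cap\Gamma_+^\delta$ together with the flow-invariance $\Lie_X v = 0$ near $\partial M$ and the identity $\Lie_X\omega = K\beta$ should then allow one to recognise the boundary integrand as exact in the relevant region, yielding $\pi_*v=0$.

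With $\pi_*v=0$ in hand, the construction in the proof of Lemma~\ref{lem:cohomology} produces $\tilde f\in\Omega^0(M)$ with $\iota_V v=-\Lie_V\tilde f$ and $v+d\tilde f=\pi^*w$ for a closed $w\in\Omega^1(\Sigma)$. The base-cycle period of $j^*v$ on $T_i$ then equals $\int_{\partial\Sigma_i}w$ under the inclusion $\partial\Sigma_i\hookrightarrow\Sigma$. On the open set $\Sigma\setminus\pi(\Gamma_+^\delta)$, $v$ vanishes on every fibre, which forces $\tilde f$ to be fibre-constant there and hence to descend to a smooth potential $F$ with $w = dF$. Using the convex-hull picture---strict convexity of $\partial\Sigma$ with negative curvature confines the essential part of $\Gamma_+$ to a neighbourhood of $\MCH$, so that $\pi(\Gamma_+^\delta)$ stays away from a deep sub-annulus of each funnel---one homologues $\partial\Sigma_i$ in $\Sigma$ to a cycle $C'_i$ in $\Sigma\setminus\pi(\Gamma_+^\delta)$ lying in the $i$-th funnel, and Stokes yields $\int_{\partial\Sigma_i}w=\int_{C'_i}w=0$.

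The main obstacle is the first step: establishing $\int_{\partial M}v\wedge\omega=0$ is genuinely delicate because $\partial M$ is only strictly convex and not totally geodesic (so the simplification $j^*\omega=d\theta$ used in Lemma~\ref{lem:cohomology} is unavailable), and it requires combining the flow characterisation of $v$ near $\partial M$ with the support condition $\supp v\subset\Gamma_+^\delta\cap\partial_+M$.
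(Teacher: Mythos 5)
Your overall strategy (check that the periods of the closed form $j^*v$ vanish over a fibre cycle and a base cycle on each boundary torus, and use a Gauss--Bonnet-type identity for the fibre period) is the right one, but there are two real problems and one reversal of order that matters.

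First, the base-cycle period does not need the apparatus you build. The paper's argument is essentially one line: pick a representative $\gamma_1$ of the base cycle lying entirely inside $\partial_-M$ (possible after arranging the $\S^1_\theta$-coordinate so that a fixed angle $\theta_0$ selects the incoming normal direction along all of $\partial\Sigma$). Since $\supp f\subset\Gamma_+^\delta$ and $\supp u\subset\Gamma_+$, with $\Gamma_+^\delta\cap\partial M\subset\partial_+M$, the form $j^*v$ vanishes identically on a neighbourhood of $\gamma_1$, and $\int_{\gamma_1}v=0$. Your route of pushing forward to $\Sigma$, producing a potential $w$, and then homologizing $\partial\Sigma_i$ to a cycle in a ``deep sub-annulus of each funnel'' is both more complicated and, as stated, incorrect: $\Gamma_+\cap\partial M\subset\partial_+M$ projects onto all of $\partial\Sigma$, so $\pi(\Gamma_+^\delta)$ reaches $\partial\Sigma$ and there is no sub-annulus of the collar that it avoids.

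Second, and more seriously, you have correctly identified the obstacle in the fibre-period step but not the fix. You write that $j^*\omega=d\theta$ is unavailable on $\partial M$ because $\partial M$ is only strictly convex. That is exactly why the Gauss--Bonnet identity should be lifted to $\MCH$ rather than $M$: integrating $-v\wedge d\omega=-Kv\wedge\alpha\wedge\beta$ over $\MCH$ instead of $M$, the boundary term is $\int_{\partial\MCH}v\wedge\omega$, and the boundary of the convex hull \emph{is} totally geodesic, so there $j^*\omega=d\theta$ does hold. In the coordinates $[0,1]_\rho\times\S^1_t\times\S^1_\theta$ near $\partial\MCH$ the boundary integral becomes $\int_{\partial\MCH}v_t\,dt\wedge d\theta=\int_0^{2\pi}\bigl(\int_0^{2\pi}v_t\,dt\bigr)\,d\theta$, and the inner integral vanishes by the base-period result: $\S^1_t\times\{\theta_0\}\subset\partial\MCH$ is homotopic in $M$ to the curve $\gamma_1\subset\partial_-M$, and $v$ is closed and smooth. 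This shows, incidentally, that the order of your two steps should be reversed: the fibre-period computation consumes the base-period vanishing, not the other way around. The attempt to instead exploit $\Lie_X\omega=K\beta$ and flow-invariance of $v$ near $\partial M$ to make $v\wedge\omega$ exact on $\partial M$ is not a correct substitute, because that identity gives no control over the $\omega$-periods on the non-geodesic boundary.
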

\begin{proof}
We simplify the exposition by supposing $\partial M$ consists of a single connected component (which is isomorphic to a torus). Noting that $\pi_1(\partial M)=\pi_1(\partial \Sigma)\times \pi_1(\S^1)=\Z^2$, it then suffices to show that $\int_{\gamma_i} v=0$ where $\gamma_1,\gamma_2$ are two simple closed curves which generate $\pi_1(\partial M)$. Take $\gamma_1$ to be the curve which corresponds to the generator of $\pi_1(\partial\Sigma)$, and $\gamma_2$ corresponding to the $\S^1$ fibre.

We now take local coordinates similar to Lemma~\ref{lem:cohomology}. For the moment, we work near $\partial M$ rather than $\partial \MCH$. The manifold appears as $[0,1]\times\partial\Sigma\times\S^1\simeq[0,1]_\rho\times\S^1_t\times\S^1_\theta$. We may choose $\gamma_1$ such that its image is entirely contained in $\partial_-M$. As the restrictions of $u,f$ are contained in $\Gamma_+,\Gamma_+^\delta\subset \partial_+M$ respectively, we obtain immediately
\begin{align}
	\int_{\gamma_1} v = 0.
\end{align}

In order to show $\int_{\gamma_2} v$ also vanishes, we work with $\MCH$. Recall the push-forward map $\pi_* : \Omega^1(M)\to\Omega^1(\Sigma)$ which, as $\pi,j$ commute, provides a push-forward $\pi_* : \Omega^1(\partial \MCH)\to\Omega^1(\partial\SigmaCH)$. As the $\S^1$ fibres are homotopic to eachother, $\int_{\gamma_2} v=\pi_* v$. Gauss-Bonnet over $\SigmaCH$ lifts to $\MCH$ as in Lemma~\ref{lem:cohomology}:
\begin{align}
	2\pi\chi(\Sigma) \cdot \pi_* v
    &=
    \int_{\MCH} -v\wedge d\omega
    =
    \int_{\partial_{\MCH}} v \wedge \omega.
\end{align}
With coordinates $[0,1]_\rho\times\S^1_t\times\S^1_\theta$, the curvature form restricted to $\partial\MCH$ is simply $d\theta$. Therefore, writing $v=v_\rho d\rho+v_t dt + v_\theta d\theta$,
\begin{align}
	2\pi\chi(\Sigma) \cdot \pi_* v
    &=
	\int_{\partial\MCH} v_t dt\wedge d\theta
    =
    \int_0^{2\pi}\left( \int_0^{2\pi} v_t dt \right) d\theta
    =
    \int_0^{2\pi} 0 d\theta
    =0
\end{align}
because $\int_0^{2\pi} v_t dt$ vanishes from the prior calculation showing $\int_{\gamma_1} v=0$. As $\chi(\Sigma)<0$, we conclude
\begin{align*}
	\int_{\gamma_2} v = 0.
\end{align*}
Therefore $[j^*v]=0\in H^1(\partial M)$ implying the existence of the required $h\in\Omega^0(\partial M)$ such that $j^*v=dh$.
\end{proof}

\begin{remark}\label{rem:support-h}
The previous lemma ensures that it is possible to define an $h\in \Omega^0(\partial M)$ such that $[(v,h)]\in H^1(M,\partial M)$. However there are $n-1$ degrees of freedom in the choice of $h$ due to the $n$ connected components of $\partial M$. (An overall constant would not be seen by relative cohomology.) These degrees of freedom are fixed by the following declaration: The form $j^*v$ has support contained in $\Gamma_+^\delta \subset \partial_+M$ therefore $dh=0$ on $\partial_-M$ and is therefore constant on $\partial_-M$. We declare that $h$ must be chosen to vanish on $\partial_-M$ whence we may assume $\supp(h)\subset \partial_+M$.
\end{remark}

\begin{proposition}\label{prop:well-defined}
Lemma~\ref{lem:analysis}, Lemma~\ref{lem:exact-v-h}, and Remark~\ref{rem:support-h} establish a well-defined map
\begin{align}
	\Res_1(0) \ni u \longmapsto [(v,h)]\in H^1(M,\partial M).
\end{align}
\end{proposition}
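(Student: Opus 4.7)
The plan is to verify two things: that each pair $(v,h)$ produced by the construction is a cocycle in the Bott-Tu complex, and that its cohomology class in $H^1(M,\partial M)$ is independent of the one remaining free choice, namely the distribution $f$ from Lemma~\ref{lem:analysis} (given $f$, the form $v=u-df$ is determined, and Remark~\ref{rem:support-h} then uniquely fixes $h$). The cocycle conditions $dv=0$ and $j^*v=dh$ are read off directly from Lemma~\ref{lem:analysis} and Lemma~\ref{lem:exact-v-h}, so I focus on independence of $f$.

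Given two choices $(f_1,v_1,h_1)$ and $(f_2,v_2,h_2)$, I will set $F:=f_1-f_2\in\D'(M)$; since $v_i=u-df_i$, this gives $dF=v_2-v_1\in\Omega^1(M)$, a smooth $1$-form. The first substantive step is to upgrade $F$ itself from a distribution to a smooth function: a standard de Rham argument produces a smooth primitive $\psi\in C^\infty(M)$ of the closed smooth form $dF$, and then $F-\psi$ is a closed distribution on the connected manifold $M=S^*\Sigma$, hence a single constant. Therefore $F\in C^\infty(M)$. The support conditions on $f_1,f_2$ from Lemma~\ref{lem:analysis} also yield $\supp F\subset \Gamma_+^\delta$ for some $\delta$ small enough that $\Gamma_+^\delta\cap\partial M\subset\partial_+M$.

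Setting $\phi:=f_2-f_1\in C^\infty(M)$ gives $d\phi=v_1-v_2$ and $\supp(j^*\phi)\subset\partial_+M$. To conclude that the Bott-Tu differential of $(\phi,0)$ equals $(v_1-v_2,\,h_1-h_2)$, it remains to identify $j^*\phi$ with $h_1-h_2$. Both sides are smooth $0$-forms on $\partial M$ with common differential $j^*(v_1-v_2)$, so $\xi:=(h_1-h_2)-j^*\phi$ is locally constant on $\partial M$. The normalization of Remark~\ref{rem:support-h} forces $h_1,h_2$ to vanish on $\partial_-M$, and $j^*\phi$ vanishes there too, so $\xi|_{\partial_-M}=0$. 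Since each connected component of $\partial M$ is a torus whose fibre $\S^1$ contains both incoming and outgoing directions, every component meets $\partial_-M$; hence $\xi\equiv 0$ on $\partial M$, and the two cocycles differ precisely by the Bott-Tu differential of $(\phi,0)$.

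The main obstacle I expect is the regularity claim that $F=f_1-f_2$ is genuinely smooth, which rests on the comparison of smooth and distributional de Rham cohomology on a manifold with boundary (so that smoothness of $dF$ propagates to $F$, modulo a constant absorbed by the connectedness of $M$). Once that is in hand, the remaining work---checking the cocycle property, using Remark~\ref{rem:support-h} to dispose of the $(n-1)$-dimensional locally-constant ambiguity in $h$, and identifying the difference as a Bott-Tu boundary---is formal.
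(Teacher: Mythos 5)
Your proof is correct and follows essentially the same route as the paper: set $\phi = f_2-f_1$, show $\phi$ is smooth because $d\phi$ is, and exhibit the difference of cocycles as the Bott--Tu differential of $\phi$, using vanishing on $\partial_-M$ to kill the locally-constant ambiguity on $\partial M$. You merely spell out two points the paper leaves implicit---the de Rham/elliptic-regularity argument for smoothness of $\phi$, and the fact that every connected component of $\partial M$ meets $\partial_-M$---which are correct and worth noting.
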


\begin{proof}
Suppose Lemma~\ref{lem:analysis} provides $f_i$ and $v_i=u-df_i$ for $i\in\{1,2\}$. Then Lemma~\ref{lem:exact-v-h} and Remark~\ref{rem:support-h} provide $h_i$ with $j^* v_i = d h_i$ and $h_i$ vanish on $\partial_-M=0$. We aim to construct $k\in \Omega^0(M)$ such that 
\begin{align*}
(dk,j^*k) = (v_1-v_2, h_1-h_2)
\end{align*}
in order to verify that the relative cohomology class is independent of the choices made.

Set $k:= f_2-f_1$. As $dk=v_1-v_2$ is smooth, we conclude $k$ itself is smooth. Next,
\begin{align*}
d(j^*k) = j^*d(f_2-f_1) = j^*(v_1-v_2) = d(h_1-h_2)
\end{align*}
so $j^*k= h_1-h_2 + c$ where $c$ is a function constant on each connected component of $\partial M$. As $h_i$ vanish on $\partial_-M$ and $\supp(k)\subset \Gamma_+^\delta$, we conclude the function $c$ is the zero function.
\end{proof}

\subsection{Injectivity}

Given the notation established from the previous subsection, suppose that for a given $u\in\Res_1(0)$, we obtain $[(v,h)]=0\in H^1(M,\partial M)$. This implies the existence of $k\in \Omega^0(M)$ whose Bott and Tu differential gives $(v,h)$. That is, $(dk,j^*k)=(v,h)$. This implies that $u=d(f+k)$. However as $u$ vanishes away from $\Gamma_+$, we know that $f+k$ is smooth on $M\backslash \Gamma_+$ and is constant on each connected component of $M\backslash \Gamma_+$. There are $n$ connected components of $M\backslash \Gamma_+$ and each component may be identified with the $n$ connected components of $\partial_-M$ (upon following geodesics in backward time until they reach the boundary). So the value of $f+k$ on a connected component is determined by its value upon restriction to the corresponding component of $\partial_-M$. Now $\supp(f)\subset\Gamma_+^\delta$ and $j^*k=h$ which by Remark~\ref{rem:support-h} vanishes on $\partial_-M$. Therefore $f+k=0$ on $M\backslash\Gamma_+$ and upon observing
\begin{align}
	\supp(f+k)\subset \Gamma_+,
    \qquad
    \WF(f+k) \subset E^*_+,
    \qquad
    \Lie_X (f+k)=\iota_X u =0,
\end{align}
we conclude $f+k\in\Res_0(0)=\{0\}$ hence $u=0$.

\subsection{Surjectivity}\label{subsec:surjectivity}

Consider an element of $H^1(M,\partial M)$. Suppose it takes the form $[(\tilde v, \tilde h)]$. We first remark that $\tilde h$ may be extended to a smooth function on $M$ whose Bott and Tu differential gives $0\in H^1(M,\partial M)$ and which may be subtracted from our original element. We may therefore assume the element of $H^1(M,\partial M)$ takes the form $[(\tilde v,0)]$ for some modified $\tilde v$.

By \cite[Section 4]{Guillarmou2017LensSets}, there exists $\tilde f\in\D'(M)$ with $\WF(\tilde f)\subset E^*_+$ subject to the  boundary value problem
\begin{align}
	\begin{cases}
	\Lie_X \tilde f=-\iota_X \tilde v;
    \\
    \tilde f|_{\partial_-M}=0.
	\end{cases}
\end{align}
Set $u:=\tilde v+d\tilde f$. Immediately, $\iota_X u =0$ and since $\tilde v$ is closed, $\Lie_X u=0$. It remains to obtain a support condition on $u$ to conclude that $u$ is a resonant state. To this end, consider a point $x\in \partial_-M$ and $U$ a neighbourhood in $\partial_-M$ of $x$. Locally near $x$, $X$ is transversal to $\partial_-M$ and is incoming. We may thus consider a chart $[0,\varepsilon)_\rho\times U_{(t,\theta)}$ on which $X$ takes the form $\partial_\rho$. Writing
\begin{align}
	u|_{[0,\varepsilon)\times U} = u_\rho d\rho + u_t dt + u_\theta d\theta
\end{align}
we see that $u_\rho=0$ (since $\iota_X u=0$) and that $u_t,u_\theta$ are independent of $\rho$ (since $du=0$). So $u_t,u_\theta$ are determined by their values on $\{0\}\times U$ but by the initial condition of the boundary value problem
\begin{align}
	j^* u |_{\partial_-M} = j^*(\tilde v+d\tilde f) |_{\partial_-M} = d(0+j^*\tilde f) |_{\partial_-M} = 0.
\end{align}
Therefore $u$ vanishes on a neighbourhood of any point in $\partial_-M$. Moreover $u$ is smooth away from $\Gamma_+$ and in the kernel of $\Lie_X$. Therefore $\supp(u)\subset \Gamma_+$ hence $u\in\Res_1(0)$.

To be completely at peace, we ought check that $u$ gives back the original cohomology element following Proposition~\ref{prop:well-defined}. The argument is that of the proof of Proposition~\ref{prop:well-defined}: Suppose Lemma~\ref{lem:analysis} provides $f$ and $v=u-df$. Then Lemma~\ref{lem:exact-v-h} and Remark~\ref{rem:support-h} provide $h$ with $j^*v=dh$ and $h$ vanishes on $\partial_-M$. We must construct $k\in \Omega^0(M)$ such that 
\begin{align*}
(dk,j^*k) = (v-\tilde v, h).
\end{align*}
Set $k:=\tilde f - f$. As $dk= v-\tilde v$ is smooth, so too is $k$. Also $d(j^*k)=dh$ and both $k$ and $h$ vanish  on $\partial_-M$ so $j^*k=h$.

\section{Semisimplicity}\label{sec:semisimplicity}

Proposition~\ref{prop:Semisimplicity} states that that the pole of $R_1(\lambda)$ at $\lambda=0$ is simple. The proof consists of two parts; a microlocal argument and a geometrical argument. Lemma~\ref{lem:flux-simple} announces the microlocal result in a simplified form. The more general microlocal result is Lemma~\ref{lem:flux-full} stated and proved in Section~\ref{sec:analysis}.

\begin{lemma}\label{lem:flux-simple}
Let $f'\in\D'(M)$ with $\WF(f')\subset E_+^*$. If $\Lie_X f'\in C^\infty_0(M)$ and
\begin{align}
\Re \int_M \Lie_X f' \, \overline{f'} \, d\vol_M =0,
\end{align}
then $f'\in C^\infty(M)$.
\end{lemma}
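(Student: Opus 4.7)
The plan is to adapt the microlocal argument of \cite[Lemma 2.3]{Dyatlov2017RuelleSurfaces}, which handles the closed-manifold analogue, to the present open setting using the anisotropic Sobolev and escape-function apparatus of \cite{Dyatlov2016Pollicott-RuelleSystems}. The underlying mechanism is a positive commutator estimate at the radial sink $E^*_+$, with the flux hypothesis playing the role of the threshold condition needed to close it.

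A preliminary reduction: $\Lie_X$ has principal symbol $i\xi(X)$, with characteristic set $T^*_0 M$. Since $\Lie_X f' \in C^\infty_0(M)$, elliptic regularity gives $\WF(f') \subset T^*_0M$, consistent with the hypothesis $\WF(f') \subset E^*_+$. It therefore suffices to establish $H^s$-regularity microlocally on a conic neighborhood of $E^*_+$ for every $s>0$. For this, fix $s>0$ and construct a properly supported, self-adjoint pseudodifferential operator $A$ of order $s/2$, microlocally elliptic on a small conic neighborhood of $E^*_+$, essentially supported in a slightly larger one, and compatible with an anisotropic Sobolev space of \cite{Dyatlov2016Pollicott-RuelleSystems} in which $f'$ lives. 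Because $X$ preserves $d\vol_M=\alpha\wedge d\alpha$, the formal $L^2$-adjoint of $\Lie_X$ is $-\Lie_X$, whence
\begin{align*}
\langle [A^*A, \Lie_X] f', f' \rangle = 2\Re \langle A\Lie_X f', Af' \rangle.
\end{align*}
The principal symbol of $\tfrac{1}{i}[A^*A,\Lie_X]$ is $H_{\xi(X)}(|\sigma(A)|^2)$, where $H_{\xi(X)}$ generates the symplectic lift of the geodesic flow. The defining property of $E^*_+$ is exponential expansion of $|\xi|$ along this lifted flow, so $E^*_+$ is an attracting invariant subset of the cosphere bundle. A standard escape-function construction then makes $H_{\xi(X)}(|\sigma(A)|^2)$ of definite sign on the elliptic set of $A$ modulo symbols supported away from $E^*_+$ (where $f'$ is already smooth), and sharp G\aa rding yields
\begin{align*}
\|Af'\|_{L^2}^2 \leq C \bigl| \Re\langle A\Lie_X f', Af'\rangle \bigr| + C\|Bf'\|_{H^{s/2}}^2 + C\|f'\|_{H^{-N}}^2
\end{align*}
for some $B$ microlocally supported away from $E^*_+$.

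The flux hypothesis controls the cross term. Using a regularization $A=A_\varepsilon$ approaching the identity microlocally and passing to the limit, $\Lie_X f' \in C^\infty_0(M)$ together with $\Re\langle \Lie_X f', f'\rangle =0$ shows that $\Re\langle A_\varepsilon \Lie_X f', A_\varepsilon f'\rangle$ vanishes to leading order. Iterating across all $s>0$ then gives $f' \in C^\infty(M)$.

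The principal obstacle relative to the closed case is that $E^*_+$ is defined only on $\Gamma_+$ and that the microlocal cutoff, together with the underlying anisotropic Sobolev space, must be constructed compatibly with $\partial M$ so that none of the pairings or commutator calculations acquires a spurious contribution from the boundary. Strict convexity of $\partial M$ provides the escape function needed for this construction, and packaging these boundary subtleties is precisely the role of the more general Lemma~\ref{lem:flux-full} to be stated and proved in Section~\ref{sec:analysis}.
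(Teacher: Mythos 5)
Your sketch matches the paper's strategy: a positive-commutator radial estimate in the spirit of Dyatlov--Zworski's closed-case Lemma~2.3, with the flux hypothesis supplying the threshold condition, and the open-manifold technicalities deferred to Lemma~\ref{lem:flux-full} (which is exactly how the paper proceeds). A few imprecisions are worth flagging. First, $E^*_+$ is a radial \emph{source} for $H_{\xi(X)}$, not an attracting sink: the forward cotangent lift of the flow expands $|\xi|$ on $E^*_u$, and it is precisely this source geometry that makes a threshold (flux) condition necessary at all --- a sink would give unconditional regularity. Second, the escape function of Lemma~\ref{lem:escape-function} is produced by hyperbolicity of the flow on $\Gamma_+$, not by strict convexity of $\partial M$; convexity instead serves to extend $\Sigma$ to $\SigmaE$ and to make the escape rate negative. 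Third, rather than fixed-order anisotropic Sobolev operators on $M$, the paper works on the open extension $\ME$ with \emph{compactly supported} semiclassical symbols and an induction in $\hbar$, since a globally defined homogeneous order-$s/2$ escape function is unavailable on the noncompact $\ME$ --- this replacement is the ``subtle adaptation'' the paper alludes to and that your iteration over $s$ glosses over.
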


Consider a generalised resonant state $\uTilde$ associated with the resonance $\lambda=0$. That is, $\uTilde \in \D'(M; T^*_0M)$ with $\supp (\uTilde) \subset \Gamma_+$ and $\WF(\uTilde)\subset E^*_+$. In order to show that all generalised resonant states are true resonant states it suffices to assume $\Lie_X^2 \uTilde =0$ and show that $\Lie_X \uTilde =0$. To this end suppose $u:=\Lie_X \uTilde \in \ker \Lie_X$.

Define $\fTilde$ by $\fTilde d\vol_M:=\alpha\wedge d\uTilde$. Then $(\Lie_X \fTilde) d\vol_M = \alpha\wedge d u =0$ since $du\in\Res_2(0)=\{0\}$. So $\Lie_X \fTilde=0$ hence $\fTilde\in\Res_0(0)=\{0\}$. We conclude that $\alpha\wedge d \uTilde$ vanishes hence $\alpha\wedge u$ is exact:
\begin{align}
	d\uTilde = \alpha \wedge u.
\end{align} 

As $u\in\ker\Lie_X$, we consider a representative of the relative cohomology class obtained from $u$ as constructed in Section~\ref{sec:construction-cohomology}. Following Section~\ref{sec:construction-cohomology}, introduce 
$f\in \D'(M)$, $v\in \Omega^1(M)$, and $h\in C^\infty(\partial M)$ such that
\begin{align}
	\supp(f)\subset \Gamma_+^\delta,
    \qquad
    \WF(f)\subset E^*_+,
    \qquad
    \Lie_X f \in C^\infty_0(M),
\end{align}
and $v=u-df$ with $j^*v = dh$. Note $dv=0$ and $\supp(\iota_X v)\subset M^o$. Moreover $\supp(h)\subset \partial_+M$ by Remark~\ref{rem:support-h}.

We extend $h$ to $h'\in C^\infty(M)$ such that $\Lie_X h'\in C^\infty_0(M)$ and $j^*h'=h$ in the following way.
Let $U\subset\partial M$ be open and relatively compact in $\partial_+M$ such that $\supp(h)\subset U$. As $X$ is transversal and outgoing on $U$, there exists a chart
\begin{align}
	[0,\varepsilon)_\rho \times U_{(t,\theta)} \subset M
\end{align}
on which $X=-\partial_\rho$. Consider a cutoff $\chi\in C^\infty([0,\varepsilon);[0,1])$ with $\supp(\chi)\subset [0,\varepsilon/2]$ and $\chi|_{[0,\varepsilon/3]}=1$. Now define $h'\in C^\infty(M)$ by declaring 
\begin{align}
	h'(\rho,t,\theta):=\chi(\rho) \cdot h(t,\theta)
\end{align}
on $[0,\varepsilon)\times U$ and $h'=0$ elsewhere. Then $h'$ has the desired properties. We remark
$[(dh',h)]=0$ in $H^1(M,\partial M)$ since $(dh',h)$ is the Bott and Tu differential of $h'$.

Motivated by the equality $[(v,h)]= [(v,h)]-[(dh',h)]$, set
\begin{align}
	f':=f+h',
	\qquad
    v':=u-df'=v-dh'.
\end{align}
Then $j^*v'=0$, $dv'=0$, and $\Lie_X f'\in C^\infty_0(M)$. In order to place ourselves in the setting of Lemma~\ref{lem:flux-simple} we calculate
\begin{align}
	\int_M \Lie_X f' \, \overline{f'} \, d\vol_M
		&=
        - \int_M \overline{f'} \, \iota_X v' \, d\vol_M
        \\
        &=
        - \int_M \overline{f'} \, v' \wedge d\alpha
        \\
        &=
        - \int_M \overline{df'} \wedge v' \wedge \alpha
        \\
        &=
        - \int_M \overline{u-v'} \wedge v' \wedge \alpha
\end{align}
Note that the third equality does not produce a boundary term as $j^*v'=0$.
Passing to the real part of the preceding equality, the term involving $\overline{v'}\wedge v$ vanishes. Using the exactness of $\alpha\wedge u$ and (for a second time) that $dv'=0$ we conclude
\begin{align}
	\Re \int_M \Lie_X f' \, \overline{f'} \, d\vol_M
    	&=
        - \Re \int_M \overline{u} \wedge v' \wedge \alpha
		=
        - \Re \int_M v' \wedge \overline{ d \uTilde }
        =
        0.
\end{align}
By Lemma~\ref{lem:flux-simple}, we deduce $f'$ is smooth hence $u$ is smooth. Therefore $u$ is forced to vanish as $\supp(u)\subset \Gamma_+$. This finishes the proof as it shows that the generalised resonant state $\uTilde$ is actually a true resonant state. 

\section{All things analysis}\label{sec:analysis}

This final section tidies up the loose analytical threads and proves Lemmas~\ref{lem:analysis} and~\ref{lem:flux-simple}. Due to the microlocal nature of the proofs, it is easier to work on either a compact manifold without boundary (in the spirit of \cite{Dyatlov2016Pollicott-RuelleSystems}) or on an open manifold (in the spirit of \cite{Guillarmou2017LensSets}) rather than on a compact manifold with boundary. We choose to work with an open manifold. We consider $\Sigma$ inside a small extension $\SigmaE$ which is non-compact. For example, using a boundary defining function $\rho$ giving the collar neighbourhood $[0,1]_\rho\times\partial\Sigma$ of $\partial\Sigma$ in $\Sigma$, we could define $\SigmaE$ to be $\left( (-\varepsilon,0)_\rho\times \partial\Sigma \right) \cup \Sigma$. We may extend $g$ to a negatively curved metric on $\SigmaE$ and construct $\ME:=S^*\SigmaE$ such that the hypersurfaces $\{ \rho = - \varepsilon' \}$ for $\varepsilon'<\varepsilon$ are all strictly convex hypersurfaces inside $\ME$. The constructions of Section~\ref{sec:notation} extend from $M$ to $\ME$. Note also that the spectral projectors obtained in Section~\ref{sec:zeta-resonances} are almost identical to the spectral projectors one obtains on $\ME$. Precisely, if $\Pi_{\mathrm{e},\lambda_0}$ denotes the spectral projector on $\ME$ then restricting its domain to compactly supported sections on $M$ and then restricting the resulting distribution on $\ME$ to $M$ recovers $\Pi_{\lambda_0}$. We will therefore not distinguish the projectors and continue to use the notation from Section~\ref{sec:zeta-resonances}.

\subsection{Proof of Lemma~\ref{lem:analysis}}
We start by collecting three separate ideas of the proof in the following three remarks.
\begin{remark}\label{remark:1}
Naively, following the construction in the compact without boundary setting, introduce a metric on $\ME$, construct the Hodge Laplacian $\Delta$, the divergence $d^*$, and a paramatrix $Q$ for $\Delta$. One sets $f:=d^*Qu$ and then checks $u-df$ is smooth by the following argument. First,
\begin{align*}
	u-df
    &=
    u-d d^* Qu - d^* dQu + d^*dQu
    \\
    &=
    u-\Delta Qu + d^* dQu
    \\
    &= d^*dQu + \Omega^1(\ME)
\end{align*}
By elliptic regularity, it suffices to then show $\Delta d^* d Q u$ is smooth which follows because $\Delta$ commutes with $d^*$ and $d$ and, as $u$ is a resonant state, $du$ is smooth.
\begin{align*}
	\Delta d^* d Q u = d^* d \Delta Q u = d^* d ( u + \Omega^1(\ME) ).
\end{align*}
\end{remark}
\begin{remark}\label{remark:2}
Consider $u\in \D'(\ME;\Lambda^k T^*\ME)$ with $du$ smooth and let $\chi\in C^\infty(\ME;[0,1])$ with $\supp(\chi)\subset \Gamma_+^\delta$ and $\chi=1$ on $\Gamma_+^{\delta/2}$. Then set $f:=\chi d^* Q u$ and the following argument shows that $u-df$ is smooth. First $Qu$ is smooth away from $\Gamma_+$ hence $[d,\chi] d^*Qu$ is smooth since $d\chi$ vanishes near $\Gamma_+$. Then
\begin{align*}
	u-df = u- \chi dd^*Qu - [d,\chi] d^*Qu = \chi ( u - dd^*Qu) + \Omega^k(\ME)
\end{align*}
and $u-dd^*Qu$ is smooth by the previous remark. 
\end{remark}

\begin{remark}\label{remark:3}
Consider a closed Riemannian manifold $(Y,g_Y)$ and the cylinder $\R_\rho\times Y$ with metric $g=d\rho^2+g_Y$ and projection $\pi_Y:\R\times Y\to Y$. Consider further the Hodge Laplacians $\Delta_Y,\Delta$ and let $Q_Y,Q$ be respective paramatrices, finally, let $d_Y^*,d^*$ be the adjoints of the de Rham differentials. Consider a form $u_Y\in \D'(Y;\Lambda^kT^*Y)$ and set $u:=\pi_Y^* u_Y$. After constructing
\begin{align}
	f_Y:=d_Y^*Q_Y u_Y,
    \qquad
    f:=d^*Qu,	
\end{align}
we consider the difference $\pi_Y^* f_Y - f$. This difference is smooth due to the following argument. By elliptic regularity, it suffices to show $\Delta ( \pi_Y^* f_Y - f )$ is smooth. Due to the product structure of $g$, we have $\Delta \pi_Y^* f_Y = \pi_Y^* \Delta_Y f_Y$. Similarly $\pi_Y^* d_Y^* u_Y=d^* u$. Therefore
\begin{align*}
	\Delta \left( \pi_Y^* f_Y - f \right)
    	&=
        \pi_Y^* d_Y^* \Delta_Y Q_Y u_Y - d^* \Delta Q u
        \\
        &=
        \pi_Y^* d_Y^* \left( u_Y + \Omega^k(Y) \right) - d^* \left ( u + \Omega^k(\R\times Y) \right)
        \\
        &=
        d^*u - d^*u + \Omega^k(\R\times Y).
\end{align*}
\end{remark}

\begin{proof}[Proof of Lemma~\ref{lem:analysis}.]
Consider $u\in \D'(\ME;\Lambda^1 T^*_0\ME)$ a resonant state. We can choose an open set $U\subset\partial M$ containing $\Gamma_+\cap\partial M$ such that on the chart $V:=(-\varepsilon,\varepsilon)_\rho \times U \subset \ME$ the geodesic flow takes the form $-\partial_\rho$. As in Subsection~\ref{subsec:surjectivity}, we conclude that on $V$, we may write $u=\pi_\partial^* u_\partial$ for some $u_\partial \in \D'(\partial M;\Lambda^1 T^*\partial M)$ where $\pi_\partial:(-\varepsilon,\varepsilon)\times \partial M\to\partial M$ is projection onto the boundary. Let $\chi\in C^\infty(\ME;[0,1])$ with $\supp(\chi)\subset \Gamma_+^\delta$ and $\chi=1$ on $\Gamma_+^{\delta/2}$. We may assume that
\begin{align*}
	\supp ( \Lie_X \psi ) \cap \{ \rho \le \tfrac{2\varepsilon}3\} = \varnothing,
\end{align*}
and that $\chi=\chi_1+\chi_2$ with
\begin{align*}
	\supp (\chi_1) \subset V\cap\{\rho<\tfrac{2\varepsilon}3\},
	\qquad
    \supp(\chi-\chi_1) \cap\{\rho\le\tfrac\varepsilon3\}=\varnothing.
\end{align*}
Now choose a metric on $\ME$ and ask that on $V$, the metric takes the product structure of Remark~\ref{remark:3}. Construct paramatrices $Q_\partial,Q$ for the Hodge Laplacian acting on $\Omega^1(\partial M),\Omega^1(\ME)$, and denote the divergences on $\partial M, \ME$ by $d_\partial^*,d^*$ respectively. Construct
\begin{align}
	f_\partial := d_\partial^* Q_\partial u,
    \qquad
    \tilde f := d^* Q u,
    \qquad
    f:= \chi_1 \pi_\partial^* f_\partial + \chi_2 \tilde f.
\end{align}
We claim that $f$ satisfies the conclusions of Lemma~\ref{lem:analysis}. By construction of $\chi$, we have the correct support condition for $\Lie_X f$ and so it remains to show that $v:=u-df$ is smooth. First,
\begin{align*}
	v 
    = u-df 
    = \chi_1 \pi_\partial^*( u_\partial - d f_\partial ) + \chi_2 ( u - d\tilde f)
    	-[d,\chi_1] \pi_\partial^* f_\partial
        -[d,\chi_2] \tilde f
\end{align*}
and the first two terms are smooth by Remark~\ref{remark:1}. The second two terms provide
\begin{align*}
	[d,\chi_1] \pi_\partial^* f_\partial
        +[d,\chi_2] \tilde f
    =
    (\partial_\rho \chi_1 ) d\rho \wedge (\pi_\partial^* f_\partial - \tilde f)
\end{align*}
which is smooth due to Remark~\ref{remark:3}.
\end{proof}

\subsection{Proof of Lemma~\ref{lem:flux-simple}}
Finally we will deduce
Lemma~\ref{lem:flux-simple}
from 
Lemma~\ref{lem:flux-full}
whose proof relies on a well-chosen cut-off in the spirit of the escape functions dating to \cite{Faure2011UpperFlows}. The microlocal and semiclassical notation used is standard with short and sufficient introductions given in \cite{Dyatlov2016DynamicalAnalysis,Dyatlov2016Pollicott-RuelleSystems} or more substantially in \cite{Hormander2007TheIII,Zworski2012SemiclassicalAnalysis}.

Geometrically, we have the radially compactified tangent bundle $\bar T^*\ME$ (which is not compact as $\ME$ is an open manifold), the sphere bundle at infinity $S^*\ME$ (which upon introduction of an arbitrary metric on $\ME$ may be thought of as the unit tangent bundle as suggested by the notation), and the projections $\kappa : \bar T^*\ME\to S^*\ME$, $\pi:S^*\ME\to\ME$. Analytically, consider the operator $P:=-i\Lie_X$ which is formally self-adjoint on $L^2(\ME)$ with respect to the volume form $d\vol_{\ME}$. Let $p$ denote its symbol, $p(x,\xi)=\xi(X_x)$ and let $H_p$ denote the associated Hamiltonian vector field on $T^*\ME$ which, upon application of $\kappa_*$, may also be viewed on $S^*\ME$.

\begin{lemma}\label{lem:escape-function}
Let $A$ be a compact subset of $T^*\ME\backslash0$ and let $D$ be a compact subset of $\Gamma_+\subset\ME$. Then there exists $\chi\in C^\infty_0(T^*\ME;[0,1])$ such that 
\begin{itemize}
\item $\chi=1$ near $\{(x,\xi)\in T^*\ME : x\in D,\xi=0\}$;
\item $H_p\chi\le0$ near $\{(x,\xi)\in T^*\ME : x\in \Gamma_+, \xi\in E^*_+ \}$;
\item $H_p\chi<0$ on $E_+^*\cap A$.
\end{itemize}
\end{lemma}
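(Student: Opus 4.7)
\emph{Strategy.} I construct $\chi$ as a product $\chi(x,\xi) = \psi(x)\,\phi_0(F(x,\xi))$, with $\psi \in C^\infty_0(\ME;[0,1])$ a base cutoff, $F \in C^\infty(T^*\ME;[0,\infty))$ a fiber ``size'' function built from a metric adapted to the hyperbolic splitting so that $H_pF > 0$ on $E^*_+\setminus 0$, and $\phi_0 \in C^\infty([0,\infty);[0,1])$ smooth non-increasing with $\phi_0 = 1$ near $0$ and compact support. The base cutoff handles the zero-section condition and the support condition; the fiber cutoff, via the instantaneous expansion of $E^*_+$ under the Hamiltonian flow of $p$, supplies the required sign of $H_p\chi$.

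\emph{Construction of the cutoffs.} Set $D' := D \cup \pi(A\cap E^*_+)$, still a compact subset of $\Gamma_+$. Using the exponential escape of $\Gamma_+\setminus K$ through the convex boundary (cf.\ \cite{Guillarmou2017LensSets}) together with closedness of $\Gamma_+$ in $\ME$, one chooses open sets $D' \subset U \Subset V$ with $V$ relatively compact in $\ME$ and $V\cap\Gamma_+ \subset U$; then the transition region $V\setminus\overline{U}$ is disjoint from $\Gamma_+$. Take $\psi$ smooth, equal to $1$ on $U$ and supported in $V$: automatically $X\psi = 0$ on a whole open neighborhood of $\Gamma_+$ in $\ME$, hence a fortiori $X\psi\le 0$ on $\Gamma_+$. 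For the fiber function, let $F(x,\xi) = \langle \xi,\xi\rangle_{g_*(x)}$ with $g_*$ a Riemannian metric on $T^*\ME$ adapted to the hyperbolic splitting; by the standard adapted-norm construction applied on a neighborhood of the trapped set and extended smoothly to $\ME$, one can arrange $H_pF > 0$ strictly on $E^*_+\setminus 0$. Choose $\phi_0$ with $\phi_0 = 1$ on $[0,r_0]$, $\phi_0' \le 0$, $\phi_0' < 0$ on a compact interval containing $F(A)$, and $\supp\phi_0 \subset [0,R]$.

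\emph{Verification and main obstacle.} Since $\psi = 1$ near $D$ and $\phi_0(F) = 1$ near the zero section, $\chi = 1$ near $\{x\in D,\, \xi = 0\}$. Expanding,
\begin{align}
H_p\chi = (X\psi)\,\phi_0(F) + \psi\,\phi_0'(F)\,H_pF,
\end{align}
the first summand vanishes on the open neighborhood of $\Gamma_+$ in $\ME$ where $X\psi = 0$; the second has $\phi_0' \le 0$ and $H_pF \ge 0$ on a cone neighborhood of $E^*_+$ (by continuity), so $H_p\chi \le 0$ near $E^*_+$. On $A\cap E^*_+$, one has $\psi = 1$, $\phi_0'(F) < 0$ strictly, and $H_pF > 0$ strictly, giving $H_p\chi < 0$. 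The main obstacle is arranging the adapted metric $g_*$ to give \emph{instantaneous} strict expansion of $E^*_+$ pointwise (as opposed to the merely asymptotic expansion given by hyperbolicity): this is classical on the compact trapped set $K$, but $E^*_+$ is only continuous over $\Gamma_+$, so smoothing and extending $g_*$ off $K$ while preserving the sign of $H_pF$ on $E^*_+$ is the delicate step. An equivalent route is the Faure--Sj\"ostrand time-averaging trick \cite{Faure2011UpperFlows} cited in the lemma's introduction.
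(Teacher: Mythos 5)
Your overall architecture matches the paper's: $\chi$ is a product of a base cutoff $\psi(x)$ and a radial fiber cutoff $\phi_0(F(x,\xi))$, with the product rule
\begin{equation}
H_p\chi = (X\psi)\,\phi_0(F) + \psi\,\phi_0'(F)\,H_pF
\end{equation}
driving the sign analysis. The paper writes this as $\chi=\chi_1\cdot(\chi_3\circ\chi_2)$ with $\chi_1$ a spatial cutoff, $\chi_2$ a homogeneous-degree-$1$ ``size'' function, and $\chi_3$ a compactly supported radial profile; your $\psi,F,\phi_0$ play exactly those roles. Your device of enlarging $D$ to $D'=D\cup\pi(A\cap E^*_+)$ and then sandwiching $D'\subset U\Subset V$ with $V\cap\Gamma_+\subset U$ so that $X\psi$ vanishes on an open neighborhood of $\Gamma_+$ is a clean way to kill the first summand; the paper's $U_1\subset U_2$ in $S^*\ME$ about $\mathcal U=\kappa(E^*_+)\cap(\kappa(A)\cup\pi^{-1}(D))$ serve the same purpose.

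Where you fall short is precisely the step you flag as ``the main obstacle'': producing a smooth $F$ with $H_pF>0$ on $E^*_+\setminus 0$. You present an ``adapted metric $g_*$'' and ``Faure--Sj\"ostrand time-averaging'' as two alternatives, and describe smoothing $g_*$ off $K$ while preserving the sign of $H_pF$ as delicate. But these are not two routes --- the time-average \emph{is} the adapted norm, and it is automatically smooth. Fix any smooth fiber norm $|\cdot|$ on $T^*\ME$. Because $E^*_+$ contracts under $e^{-tH_p}$ as $t\to+\infty$ with a uniform rate (see the estimate $|\varphi_{-t}^*\xi|\le C_1'e^{-C_2'|t|}|\xi|$ for $\xi\in E^*_+$), there is a $T>0$, uniform over the relevant compact set of directions $\kappa(E^*_+)\cap(\kappa(A)\cup\pi^{-1}(D))$, with $|e^{-TH_p}(x,\xi)|\le\tfrac12|(x,\xi)|$; by continuity this persists on a conic neighborhood $\kappa^{-1}(U_2)$. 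Then $\chi_2(x,\xi):=\int_{-T}^0|e^{tH_p}(x,\xi)|\,dt$ is smooth, homogeneous of degree $1$, and satisfies $(H_p\chi_2)(x,\xi)=|(x,\xi)|-|e^{-TH_p}(x,\xi)|\ge\tfrac12|(x,\xi)|$ there, which is exactly the strict positivity you need (your degree-$2$ quadratic $F=|\xi|^2_{g_*}$ would do just as well; the homogeneity degree is inessential). The paper executes this computation; in your write-up it is deferred, so the construction is incomplete as stated. Supplying the explicit time-average $\chi_2$ and the uniform-$T$ compactness argument closes the gap and brings your proof in line with the paper's.
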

\begin{proof}
Let $\mathcal{U}:=\kappa(E^*_+)\cap ( \kappa(A)\cup \pi^{-1}(D) )$. Let $U_2$ be open in $S^*\ME$, contain $\mathcal{U}$, and be relatively compact in $S^*\ME$. Let $U_1\subset U_2$ be open with $\mathcal{U}\subset \overline U_1\subset U_2$. Define $\chi_1\in C^\infty(T^*\ME)$ with
\begin{align*}
	\chi_1|_{\pi(U_1)} = 1,
    \qquad
	\supp ( \chi_1 ) \subset \pi(U_2).
\end{align*}

Next consider $(x,\xi)\in \kappa^{-1}(\mathcal{U})$. There exists $T>0$ (independent of $(x,\xi)$ due to the compactness of $\mathcal{U}$) such that $| e^{-TH_p}(x,\xi) | \le \tfrac12 | (x,\xi) |$. (Here, $|\cdot|$ is an arbitrary metric on the fibres.) For $U_2$ chosen sufficiently small, this property holds on $U_2$ as well. Define a homogeneous of degree 1 function $\chi_2\in C^\infty(\kappa^{-1}(U_2))$ as
\begin{align*}
	\chi_2 := \int_{-T}^0 |e^{tH_p}| dt,
    \qquad
    (H_p \chi_2)(x,\xi) = |(x,\xi)|-| e^{tH_p}(x,\xi) | \ge \tfrac12 |(x,\xi)|.
\end{align*}

Finally construct $\chi_3\in C_0^\infty (\R ; [0,1])$ such that
\begin{align*}
	\chi_3 = 1 \textrm{ near } 0,
    \qquad
    \chi_3'\le 0 \textrm{ on } [0,\infty),
    \qquad
    \chi_3' < 0 \textrm{ on } \chi_3(E^*_+\cap A).
\end{align*}
(Note that the function $\chi_3\circ\chi_2$ extends smoothly to a function on $T^*_{\pi(U_2)} \ME$.)  The function $\chi:=\chi_1 \cdot (\chi_3\circ\chi_2)$ satisfies the required conditions.
\end{proof}

\begin{lemma}\label{lem:flux-full}
Suppose $u\in \D'(\ME)$ with $\WF(u)\subset E^*_+$ and $Pu\in C^\infty_0(\ME)$. If $\Im \langle Pu,u\rangle \ge 0$ then $u\in C^\infty(\ME)$.
\end{lemma}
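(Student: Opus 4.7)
This is a radial-source estimate for $P$, in which the sign condition $\Im\langle Pu,u\rangle\ge 0$ substitutes for the usual above-threshold regularity assumption of the Vasy/Dyatlov--Zworski setup. It suffices to prove $\WF(u)\cap A=\varnothing$ for every compact $A\subset T^*\ME\setminus 0$. Given such $A$, choose a compact $D\subset\Gamma_+$ covering the base projection of the relevant part of $\WF(u)\cap A$, and apply Lemma~\ref{lem:escape-function} to obtain the escape function $\chi$.

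The core step is a positive-commutator argument. Quantize $\chi$, within an anisotropic-Sobolev framework as in \cite{Faure2011UpperFlows} so that the quantization is not merely a smoothing cutoff, to a self-adjoint $\mathcal{A}$ with principal symbol $\chi$. Using $P^*=P$ and $\mathcal{A}^*=\mathcal{A}$,
\begin{equation}
\langle i[P,\mathcal{A}]u,u\rangle = 2\Im\langle \mathcal{A} Pu,u\rangle,
\end{equation}
and $\sigma(i[P,\mathcal{A}]) = H_p\chi \le 0$ on $\WF(u)\subset E_+^*$ over $\Gamma_+$, with $H_p\chi < 0$ on $E_+^*\cap A$, by Lemma~\ref{lem:escape-function}. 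Sharp G\r{a}rding applied to $-i[P,\mathcal{A}]$ gives a factorization $-i[P,\mathcal{A}] = Q^*Q - E + R$ with $Q$ elliptic on $E_+^*\cap A$, $E$ microsupported away from $\WF(u)$ (so $Eu$ is smooth), and $R$ of one order lower. Pairing with $u$,
\begin{equation}
\|Qu\|^2 \le -2\Im\langle \mathcal{A} Pu,u\rangle + \langle Eu,u\rangle + |\langle Ru,u\rangle|.
\end{equation}
Since $Pu\in C^\infty_0(\ME)$ and one may arrange $\chi\equiv 1$ near $\supp(Pu)$ in the zero section, the correction $(I-\mathcal{A})Pu$ is a fixed smooth function whose pairing against $u$ is bounded by $C\|u\|_{H^{-N}}$. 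The sign hypothesis then gives $-\Im\langle\mathcal{A} Pu,u\rangle \le -\Im\langle Pu,u\rangle + C\|u\|_{H^{-N}} \le C\|u\|_{H^{-N}}$, so $\|Qu\|^2 \le C\|u\|_{H^{-N}}^2$. Microlocal regularity of $u$ on $E_+^*\cap A$ follows, and a standard Sobolev-order bootstrap upgrades this to $C^\infty$; varying $A$ yields $u\in C^\infty(\ME)$.

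The main obstacle is that the $\chi$ produced by Lemma~\ref{lem:escape-function} is compactly supported in $T^*\ME$, so its naive quantization is smoothing and the commutator $[P,\mathcal{A}]$ is trivial at high frequency. Overcoming this requires embedding $\chi$ as a weight in a $\log\langle\xi\rangle$-type anisotropic Sobolev space (as in Faure--Sj\"ostrand), producing a genuine order-zero operator whose commutator retains the sign of $-H_p\chi$ at every frequency. A secondary technical difficulty is arranging the sharp-G\r{a}rding factorization and the Sobolev-order bookkeeping in the bootstrap so that each iteration genuinely improves the regularity of $u$ on $E_+^*\cap A$ without being spoiled by the a priori wavefront-set control.
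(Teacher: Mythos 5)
Your positive-commutator skeleton — escape function from Lemma~\ref{lem:escape-function}, sharp G\aa rding, and the algebraic identity converting $\langle i[P,\mathcal{A}]u,u\rangle$ to $2\Im\langle\mathcal{A}Pu,u\rangle$ so that the hypothesis $\Im\langle Pu,u\rangle\ge0$ can be fed in — is the same skeleton the paper uses, and that part of the plan is sound. But the ``main obstacle'' you flag (that a compactly supported $\chi$ quantizes to a smoothing operator, making $[P,\mathcal{A}]$ trivial at high frequency) is a misdiagnosis of the situation the paper is actually in, and your proposed repair introduces a genuine gap.

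The paper works in the \emph{semiclassical} calculus, not the Kohn--Nirenberg calculus. It quantizes $\chi\in C_0^\infty(T^*\ME)$ to $F\in\Psi_\hbar^{\mathrm{comp}}(\ME)$; in the semiclassical frame a compactly supported symbol is not smoothing (it captures classical frequencies up to $O(\hbar^{-1})$), and $\tfrac1{2i}[P,F]$ has principal symbol $-\tfrac\hbar2 H_p\chi\ge0$. The lemma is then proved by an $\hbar$-induction: one shows that for every $A\in\Psi_\hbar^{\mathrm{comp}}$ microsupported away from the zero section there is $B$ with $\|Au\|_{L^2}\le C\hbar^{1/2}\|Bu\|_{L^2}+\mathcal O(\hbar^\infty)$, which iterates to $\WF_\hbar(u)\cap(T^*\ME\setminus0)=\varnothing$ and hence $u\in C^\infty$. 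The compact support of $\chi$ is exactly what makes Lemma~\ref{lem:escape-function} usable here; no global $\log\langle\xi\rangle$-type weight is needed.

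Your alternative — embedding $\chi$ as a Faure--Sj\"ostrand anisotropic weight — is in principle a viable route, but it cannot use Lemma~\ref{lem:escape-function} as stated: that lemma produces a compactly supported $\chi$ with no control at fibre infinity, whereas the Faure--Sj\"ostrand machinery requires an escape function with a prescribed homogeneous (or $\log$-homogeneous) structure at $|\xi|\to\infty$, with $H_p$-monotonicity on conic neighbourhoods of $E_+^*$ and $E_-^*$ and an order-function interpolation between them. You neither construct such a weight nor explain how to manufacture it from the compactly supported $\chi$; as written, ``embedding $\chi$ as a weight'' is a placeholder where the key construction should be. This is the gap. If you do want to pursue the anisotropic-Sobolev route, you would need to replace Lemma~\ref{lem:escape-function} by a globally homogeneous escape-function lemma and then verify that the conjugated operator's absorption term has the correct sign on $E_+^*$; alternatively, simply work semiclassically as the paper does, where the compactly supported $\chi$ and the $\hbar^{1/2}$-gain induction render the whole issue moot.
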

\begin{proof}
The hypotheses imply $\WF_\hbar(u) \cap (T^*\ME\backslash 0) \subset E^*_+$ and $\WF_{\hbar}(Pu) \cap (\bar T^*\ME\backslash0)=\varnothing$. It is sufficient to prove:
\begin{itemize}
\item
	Given $A\in\Psi_\hbar^{\mathrm{comp}}(\ME)$ with $\WF_\hbar (A) \subset T^*M \backslash 0$, there exists $B\in \Psi_\hbar^{\mathrm{comp}}(\ME)$ with $\WF_\hbar (B) \subset T^*M \backslash 0$ such that $\| Af \|_{L^2} \le C \hbar^{1/2} \| Bf \|_{L^2} + \mathcal{O}(\hbar^\infty)$.
\end{itemize}
Indeed, induction then gives $\|Af\|_{L^2} = \mathcal{O}(\hbar^\infty)$ implying $\WF_\hbar(u)\cap (T^*\ME \backslash0)=\varnothing$ so $\WF_\hbar(u)=\varnothing$ hence $u\in C^\infty(\ME)$. To this end consider $\hbar P\in\Psi^1_\hbar(\ME)$ with principal symbol $\sigma_\hbar(\hbar P)=p$ where $p(x,\xi)=\xi( X_x)$. By Lemma~\ref{lem:escape-function}, there exists $\chi\in C^\infty_0(T^*M;[0,1])$ such that
\begin{itemize}
\item $\chi=1$ near $\WF_h(P u) = \{ (x,\xi) \in T^*\ME : x\in\supp(Pu), \xi=0 \}$;
\item $H_p\chi\le 0$ near $\WF_\hbar(u) \subset \{ (x,\xi) \in \bar T^*\ME : x\in\Gamma_+, \xi\in E^*_+ \}$;
\item $H_p\chi<0$ on $E^*_+\cap \WF_\hbar(A)$.
\end{itemize}
Quantising $\chi$ gives a self-adjoint operator $F\in \Psi^{\mathrm{comp}}_\hbar(\ME)$ whose principal symbol returns $\chi$ and for which
\begin{align}
	\WF_\hbar(I-F) \cap \supp(Pf) \subset \bar T^*M\backslash 0.
\end{align}

Now $H_p\chi\le 0$ near $E_+^*$ and $H_p\chi<0$ on $E^*_+\cap \WF_\hbar(A)$ so we can construct $A_1\in\Psi^\mathrm{comp}_\hbar(\ME)$ with $\WF_\hbar(A_1)\subset T^*\ME\backslash 0$ and $\WF_\hbar(A_1)\cap E^*_+=\varnothing$ such that
\begin{align}
	-\tfrac12 H_p\chi + |\sigma_h(A_1)|^2 \ge C^{-1} |\sigma_\hbar(A)|^2.
\end{align}
One may now construct $B\in\Psi^\mathrm{comp}_\hbar(\ME)$ such that $\WF_\hbar(B) \subset T^*M\backslash 0$ and
\begin{align}
	\left(
    	\WF_\hbar [P,F] \cup \WF_\hbar(A) \cup \WF_\hbar(A)
    \right)
    \cap
    \WF_\hbar(I-B)
    =
    \varnothing.
\end{align}
As $\chi\equiv 1$ near $\WF_\hbar(Pu)$, we obtain $(I-F)Pu=\mathcal{O}(\hbar^\infty)$. The hypothesis $\Im\langle Pu,u\rangle \ge0$ and the symmetry of $F$ give $\langle \tfrac 1{2i} [P,F]u,u\rangle \le \mathcal{O}(h^\infty)$ so the sharp G{\aa}rding inequality \cite[Lemma 2.4]{Dyatlov2016Pollicott-RuelleSystems} with
\begin{align}
	\tfrac1{2i}[P,F]+A_1^*A_1-C^{-1}A^*A\ge0
\end{align}
and $Bu\in L^2(\ME)$ provides
\begin{align}
	\langle \tfrac1{2i}[P,F]Bu,Bu\rangle + \|A_1Bu\|^2_{L^2} - C^{-1} \| ABu \|^2_{L^2} \ge -\hbar \|Bu\|^2_{L^2}.
\end{align}
Noting that $\|A_1Bu\|^2_{L^2}$ and $\langle \tfrac1{2i}[P,F]Bu,Bu\rangle$ are $\mathcal{O}(\hbar^\infty)$, and that $ABu=Au$ by construction of $B$, we obtain the inequality $\| Au \|_{L^2} \le C \hbar^{1/2} \| Bu \|_{L^2} + \mathcal{O}(\hbar^\infty)$.
\end{proof}

\bibliographystyle{alpha}
\bibliography{Mendeley}
 \end{document}